\documentclass[12pt]{article}
\usepackage[
  top=2.5cm,
  bottom=3cm,
  left=2.5cm,
  right=2.5cm,
  centering
]{geometry}
\usepackage[utf8]{inputenc}
\usepackage[T1]{fontenc}
\usepackage{authblk}
\usepackage{lmodern}
\usepackage{url}
\usepackage{amsmath,amssymb,amsthm,mathtools}
\usepackage{enumitem}
\usepackage{tikz}
\usetikzlibrary{calc,positioning}
\setlist{nosep}

\usepackage[
colorlinks=true,
linkcolor=blue,
citecolor=blue,
urlcolor=cyan,
bookmarks=true,
pdftitle={Your Document Title},
pdfauthor={Your Name}
]{hyperref}

\newtheorem{theorem}{Theorem}
\newtheorem{lemma}[theorem]{Lemma}
\newtheorem{proposition}[theorem]{Proposition}
\newtheorem{definition}[theorem]{Definition}

\newcommand{\Ree}{\operatorname{Re}}
\newcommand{\Imm}{\operatorname{Im}}

\title{Stability of independence polynomials of spiders}

\author[1]{Lei Zhang}
\author[2,*]{Jianhua Tu}
\affil[1]{\small Department of Mathematics, Taiyuan University of Technology,
  Taiyuan 030024, China}
\affil[2]{\small School of Mathematics and Statistics, Beijing Technology and Business University, Beijing 100048, China}
\date{}

\begin{document}
\maketitle

\begingroup
\renewcommand{\thefootnote}{\fnsymbol{footnote}}
\footnotetext[1]{Corresponding author.\\
\indent\ \ E-mail addresses: zhanglei04@tyut.edu.cn (L. Zhang),
tujh81@163.com (J. Tu).}
\endgroup

\begin{abstract}
	For a graph $G$, let $i_k(G)$ denote the number of independent sets
	of cardinality $k$, and let
	\[
	I(G,z)=\sum_{k\ge0} i_k(G)z^k
	\]
  be its independence polynomial. Following Brown and Cameron \cite{BrownCameron2018}, a graph is called stable if all zeros
  of its independence polynomial lie in the closed left half-plane.
  They proved that every star is stable, but also constructed nonstable
  trees. They then asked for a characterization of stable trees.
	In this paper, we extend and strengthen their result by proving that every spider,
	obtained from a star by arbitrary and possibly nonuniform subdivisions
	of its edges, has all its independence roots in the open left
	half-plane. Hence, every spider is stable.
\end{abstract}
\noindent\textbf{Keywords:}
Independence polynomial; Independence root; Stability; Spider
\section{Introduction}

All graphs considered in this paper are finite and simple. For a graph
$G$, let $i_k(G)$ denote the number of independent sets of
cardinality $k$. The \emph{independence polynomial} of $G$, introduced
by Gutman and Harary~\cite{GutmanHarary1983}, is
\[
I(G,z)=\sum_{k=0}^{\alpha(G)} i_k(G)z^k,
\]
where $\alpha(G)$ denotes the independence number of $G$. The zeros of
$I(G,z)$ are called the \emph{independence roots} of $G$.

The location of independence roots has been studied extensively. One
of the fundamental results in this direction is the following theorem
of Chudnovsky and Seymour \cite{ChudnovskySeymour2007}.

\begin{theorem}\cite{ChudnovskySeymour2007}
	Every independence root of a claw-free graph is real.
\end{theorem}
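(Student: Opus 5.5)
We will prove a stronger statement by induction on $|V(G)|$. All polynomials considered have nonnegative coefficients, so their real roots are $\le 0$, and it is convenient to work with the following notion. Two real polynomials $f,g$ with positive leading coefficients are \emph{compatible} if $\lambda f+\mu g$ is real-rooted for all $\lambda,\mu\ge 0$. By the Hermite--Kakeya--Obreschkoff theorem this is equivalent to the roots of $f$ and $g$ having a common interlacer. The stronger claim is:

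\begin{quote}
\emph{For every claw-free graph $G$ and every clique $Q$ of $G$, the polynomials $I(G,z)$ and $I(G-Q,z)$ are both real-rooted, and $I(G-Q,z)$ interlaces $I(G,z)$.}
\end{quote}

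Taking $Q=\emptyset$ or a single vertex then gives the theorem. The base case of the empty graph is trivial.

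\textbf{Step 1 (recursion).} Every independent set meets $Q$ in at most one vertex. Splitting independent sets according to which vertex of $Q$, if any, they contain gives
\[
I(G,z)=I(G-Q,z)+z\sum_{v\in Q} I\bigl(G-N[v],z\bigr).
\]
Every graph on the right is an induced subgraph of $G$, hence claw-free with fewer vertices, so induction applies to it.

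\textbf{Step 2 (pairwise compatibility).} We want $I(G-Q)$ together with all the terms $zI(G-N[v])$, $v\in Q$, to be pairwise compatible. A family of polynomials with positive leading coefficients that is pairwise compatible has a real-rooted sum, and indeed a common interlacer (Chudnovsky--Seymour's lemma). This lemma is the analytic engine of the proof, and we will prove it first by a standard sign-change count.

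\begin{itemize}
\item \emph{Compatibility of $I(G-Q)$ and $zI(G-N[v])$.} We have $N[v]\supseteq Q$. Claw-freeness at $v$ shows that $N(v)\setminus Q$ contains no independent set of size $3$. So we can apply the induction hypothesis to $H=G-Q$ with the set $N[v]\setminus Q$. If this set is a clique this is immediate; otherwise we cover it by at most two cliques and apply the hypothesis twice.
\item \emph{Compatibility of $zI(G-N[u])$ and $zI(G-N[v])$ for $u,v\in Q$.} Here we pass to the common subgraph $G-(N[u]\cup N[v])$ and use the same two-clique decomposition together with transitivity of interlacing through a common interlacer.
\end{itemize}

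\textbf{Step 3 (conclusion).} Summing the pairwise compatible family shows that $I(G)$ is real-rooted and that $I(G-Q)$ interlaces it. This closes the induction.

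\textbf{Main obstacle.} The hard step is Step 2: controlling sets of the form $N[v]\setminus Q$, which are not cliques in general. If they fail to be coverable by cliques in a way compatible with the induction, we would strengthen the hypothesis further. The strengthened statement would assert interlacing of $I(G)$ and $I(G-X)$ for every $X$ with $\alpha(G[X])\le 2$ whose structure arises this way. Alternatively, we would follow Chudnovsky--Seymour's reduction and first add edges to make $G$ contain a simplicial clique while preserving claw-freeness. We expect most of the work to lie in this combinatorial bookkeeping rather than in the real-rootedness analysis.
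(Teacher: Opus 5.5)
The paper only cites this theorem from Chudnovsky--Seymour and gives no proof, so your proposal has to stand on its own. As written it has genuine gaps, beginning with the induction hypothesis, which is false.

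\textbf{The strengthened hypothesis fails.} Take $G=P_3$ with middle vertex $c$ and $Q=\{c\}$, a one-vertex clique of exactly the kind you invoke. Then
\[
I(G,z)=1+3z+z^2,\qquad I(G-Q,z)=(1+z)^2 .
\]
A polynomial with a double root at $-1$ can interlace $I(G,z)$ only if $-1$ is a root of $I(G,z)$, but $I(G,-1)=-1$. The two polynomials are compatible. However, compatibility alone cannot drive your induction, because two polynomials compatible with a third need not be compatible with each other.

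The hypothesis that does propagate is the one for \emph{simplicial} cliques: cliques $Q$ such that $N(v)\setminus Q$ is a clique for every $v\in Q$. Claw-freeness then shows that each $N(v)\setminus Q$ is itself a simplicial clique of $G-Q$. Hence every $I(G-N[v])$, $v\in Q$, interlaces $I(G-Q)$. That single fact gives all the pairwise compatibilities you want in Step~2, and via the convex-cone property of interlacing it also shows that $I(G-Q)$ interlaces $I(G)$.

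\textbf{Step~2 fails on its own terms.}
\begin{itemize}
\item Independence number at most $2$ does not give a cover by two cliques. In the claw-free wheel $W_5$, the hub's neighbourhood is $C_5$, which needs three cliques. Complements of triangle-free graphs of large chromatic number need arbitrarily many.
\item Even with two cliques, ``applying the hypothesis twice'' would require transitivity of interlacing (or of compatibility), and that is false.
\end{itemize}

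\textbf{The missing idea: non-simplicial vertices.} You must handle a vertex or clique that is not simplicial, and this cannot be avoided. The icosahedron is claw-free and has no nonempty simplicial clique. Adding edges does not help, since it changes $I(G,z)$.

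One way to close the gap is to prove, simultaneously with simplicial interlacing and by induction on $|V(G)|$, that $I(G-v)$ and $zI(G-N[v])$ are compatible for every vertex $v$. For $a\in N(v)$ write
\[
I(G-v)=I(G-v-a)+zI(G-N[a]).
\]
It then suffices to show that the three polynomials $I(G-v-a)$, $zI(G-N[a])$ and $zI(G-N[v])$ are pairwise compatible.
\begin{itemize}
\item $I(G-v-a)$ and $zI(G-N[v])$ are compatible by induction, applied to the vertex $v$ in $G-a$.
\item $I(G-v-a)$ and $zI(G-N[a])$ are compatible by induction, applied to the vertex $a$ in $G-v$.
\item For the third pair, let $Y=N(v)\setminus N[a]$ and $K=N(a)\setminus N[v]$. $Y$ is a clique by claw-freeness at $v$, and it is simplicial in $G-N[a]$. $K$ is a simplicial clique of $G-N[v]$. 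Since $(G-N[a])-Y=(G-N[v])-K$, both $I(G-N[a])$ and $I(G-N[v])$ are interlaced by the same polynomial, so they are compatible.
\end{itemize}
The Chudnovsky--Seymour pairwise lemma then gives compatibility of $I(G-v)$ and $zI(G-N[v])$. Real-rootedness of $I(G,z)=I(G-v)+zI(G-N[v])$ follows.
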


Since the coefficients of an independence polynomial are positive,
every real independence root is negative. Thus the theorem gives a
large class of graphs whose independence roots lie entirely on the
negative real axis. For general graphs, however, the picture is very
different. Brown, Hickman and Nowakowski
~\cite{BrownHickmanNowakowski2004} showed that, taken over all graphs,
independence roots are dense in the complex plane, while Brown and
Nowakowski~\cite{BrownNowakowski2005} proved that almost all graphs
have a nonreal independence root. See also
\cite{BrownDilcherNowakowski2000,LevitMandrescuSurvey2005} for further
results and background on independence roots.

A natural weakening of real-rootedness is to require all roots to lie
in the closed left half-plane. Brown and
Cameron~\cite{BrownCameron2018} initiated a systematic study of this
question. Following their terminology, a polynomial is called
\emph{stable} if all of its zeros lie in the closed left half-plane,
and a graph is called stable if its independence polynomial is stable.

\begin{theorem}\cite{BrownCameron2018}
	The following statements hold.
	\begin{enumerate}[label=\textup{(\roman*)}]
		\item Every graph $G$ with $\alpha(G)\le3$ is stable.
		\item Every star $K_{1,n}$ is stable.
		\item For every $R>0$, there exists a tree $T$ whose independence
		polynomial has a root $\zeta$ satisfying $\Ree\zeta>R$.
	\end{enumerate}
\end{theorem}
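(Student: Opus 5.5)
This theorem collects three results of Brown and Cameron, and I would prove the parts separately.

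\emph{Part (i).} If $\alpha(G)\le 1$, then $I(G,z)=1+nz$ and the only root is real and negative. If $\alpha(G)=2$, the polynomial is $1+az+bz^2$ with $a,b>0$. The product of its roots is $1/b>0$ and their sum is $-a/b<0$. So either both roots are real, hence negative, or they are conjugates with real part $-a/(2b)<0$.

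For $\alpha(G)=3$ I would use the Routh--Hurwitz criterion on $f(z)=1+az+bz^2+cz^3$ with positive coefficients. Reversing, this is equivalent to $cw^3+bw^2+aw+1$ having all roots in the open left half-plane, which holds exactly when $ab>c$. So the key step is a combinatorial inequality $i_1 i_2 \ge i_3$, strict whenever $i_3>0$. To get it, map each independent triple $S$ to the pair (vertex $v\in S$, independent pair $S\setminus\{v\}$). Every triple yields three such pairs, and each pair (vertex, 2-set) arises from at most one triple. Hence $3i_3\le i_1i_2$, which gives $i_1i_2>i_3$. Even with non-strict inequality the roots would lie in the closed half-plane, which is all that stability requires.

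\emph{Part (ii).} For the star, $I(K_{1,n},z)=(1+z)^n+z$. Suppose $\Ree z>0$ and $(1+z)^n=-z$. Then $|1+z|>|z|$, and $|1+z|>1$ as well. If $|z|\le 1$ this gives $|1+z|^n>1\ge|z|$. If $|z|>1$ it gives $|1+z|^n\ge|1+z|>|z|$. Either way $(1+z)^n\ne -z$, so no root lies in the open right half-plane.

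For the imaginary axis, take $z=it$ with $t\ne 0$. Then $|1+it|^n=(1+t^2)^{n/2}$ must equal $|t|$, but $(1+t^2)^{n/2}\ge(1+t^2)^{1/2}>|t|$, a contradiction. The point $z=0$ is not a root. So all roots lie in the open left half-plane.

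\emph{Part (iii).} This is the main obstacle, since a family of trees has to be constructed. My plan is to use the recursions $I(G,z)=I(G-v,z)+zI(G-N[v],z)$ and $I(G_1\cup G_2)=I(G_1)I(G_2)$.

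The construction I would take is $T_{m,n}$: $m$ copies of a star $K_{1,n}$, each attached by its centre to a new root vertex $r$. Expanding at $r$ gives
\[
I(T_{m,n},z)=\bigl((1+z)^n+z\bigr)^m+z(1+z)^{nm}.
\]
Let $g=(1+z)^n$. A root satisfies $(1+z/g)^m=-z$.

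The idea is to pick a target $\zeta$ with large positive real part and use Hurwitz's theorem as $m\to\infty$ (or $n\to\infty$), with $n$ tuned so that the dominant terms balance near $\zeta$. Alternatively, one can use a Beraha--Kahane--Weiss type limit argument: the roots accumulate on the curve where $|g+z|=|g|^{1/m}\cdot|z|^{1/m}\cdot\ldots$, that is, where two dominant terms have equal modulus. The remaining step is to show that this equimodular curve enters $\Ree z>R$, which I expect to be the main difficulty.

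In principle one could simply cite \cite{BrownCameron2018}, but the above is the self-contained route I would attempt.
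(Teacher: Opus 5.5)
The paper gives no proof of this theorem; it only quotes it from Brown and Cameron, so your proposal has to stand on its own. Parts (i) and (ii) do.

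\begin{itemize}
\item For (i) with $\alpha(G)=3$, the Routh--Hurwitz condition for $1+i_1z+i_2z^2+i_3z^3$ is $i_1i_2>i_3$. Your injection $(S,v)\mapsto(v,S\setminus\{v\})$ gives the stronger bound $3i_3\le i_1i_2$.
\item For (ii), $|1+z|^2=1+2\Ree z+|z|^2$ exceeds both $1$ and $|z|^2$ whenever $\Ree z\ge0$ and $z\neq0$, so $|1+z|^n>|z|$. This also follows from the paper's main theorem, since $K_{1,n}=S(1,\dots,1)$.
\end{itemize}

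Part (iii), however, is not proved. You stop exactly at the step that carries the content, and two of the ingredients you state need correction.

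\emph{The limit curve.} Fix $n$ and let $m\to\infty$. Apply the Beraha--Kahane--Weiss theorem to $I(T_{m,n},z)=1\cdot\lambda_1^m+z\cdot\lambda_2^m$, with $\lambda_1=(1+z)^n+z$ and $\lambda_2=(1+z)^n$. Their ratio is nonconstant, so there is no degenerate dominance. The theorem says every point of $|\lambda_1|=|\lambda_2|$ is a limit of zeros. Neither $m$ nor the coefficients $1,z$ enter this curve, so the curve you display is wrong. Expanding $|g+z|^2=|g|^2$ gives the usable form $\Ree\bigl((1+z)^n/z\bigr)=-\tfrac12$.

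\emph{The choice of $n$.} This is decisive, and you leave it open. Write $z=x+iy$.
\begin{itemize}
\item For $n=1$ your tree is the spider $S(2,\dots,2)$, which is stable by the paper's main theorem; the curve is the circle $\Ree(1/z)=-\tfrac32$.
\item For $n=2$ the curve is $x+x/(x^2+y^2)=-\tfrac52$, which lies in $x<0$.
\item For $n=3$ the curve is $y^2=x^2+3x+\tfrac72+x/(x^2+y^2)$. By the intermediate value theorem it has a solution $y>0$ for every $x>0$.
\end{itemize}
In general, for $n\ge3$ the curve has branches running to infinity in the directions $\arg z=\pm\pi/(2(n-1))$.

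Now take a point $z_0$ on the $n=3$ curve with $\Ree z_0>R+1$. The theorem yields, for all large $m$, a zero of $I(T_{m,3},z)$ within distance $1$ of $z_0$, hence with real part greater than $R$. With this computation your construction proves (iii); without it, part (iii) is only an outline.
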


Brown and Cameron also asked whether every complete bipartite graph is
stable. This question was recently answered by Chen, Ning and
Tu~\cite{ChenNingTu2025}. They proved that $K_{2,n}$ and $K_{3,n}$
are stable and that, for every fixed integer $k\ge0$, the graphs
$K_{m,m+k}$ are stable for all sufficiently large $m$. On the other
hand, for every fixed rational $\ell>1$, the graphs $K_{m,\ell m}$
are nonstable for all sufficiently large $m$ for which $\ell m$ is
an integer.

The present paper returns to the question of stability for trees.
Brown and Cameron reported that computations found all trees of order at
most $20$ to be stable. Together with parts (ii) and (iii) above, this illustrates a
sharp contrast: stars form a stable family, whereas trees in general
can have independence roots arbitrarily far into the open right
half-plane. It is therefore natural to ask what happens when a star
is modified while retaining a simple tree structure. The most basic
such modification is to subdivide its edges, possibly by different
amounts. The resulting trees are precisely the spiders.

\begin{definition}\label{def:spider}
	Let $d\ge1$ and let $\ell_1,\ldots,\ell_d$ be positive integers.
	The spider $S(\ell_1,\ldots,\ell_d)$ is the tree with a distinguished
	central vertex $c$ such that deleting $c$ leaves $d$ pairwise disjoint
	paths $P_{\ell_1},\ldots,P_{\ell_d}$, where $P_n$ denotes the path
	on $n$ vertices. Thus the $j$th leg has $\ell_j$ edges from $c$ to
	its leaf.
\end{definition}

Spiders have also appeared in the study of coefficient properties of
independence polynomials; see, for example, Levit and
Mandrescu~\cite{LevitMandrescu2003}. Here we consider the location of
their independence roots. Our main result shows that arbitrary, nonuniform subdivisions of the
edges of a star preserve stability.

\begin{theorem}\label{thm:main}
	For every $d\ge1$ and all positive integers
	$\ell_1,\ldots,\ell_d$,
	\[
	I\bigl(S(\ell_1,\ldots,\ell_d),z\bigr)\neq0
	\qquad\text{whenever }\Ree z\ge0.
	\]
	Hence every independence root of a spider has strictly negative real
	part, and thus every spider is stable.
\end{theorem}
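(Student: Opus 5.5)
The plan is to reduce the claim to a statement about one-variable continued fractions attached to the legs. Deleting or keeping the centre $c$ gives the recurrence
\[
I\bigl(S(\ell_1,\ldots,\ell_d),z\bigr)=\prod_{j=1}^d A_j(z)+z\prod_{j=1}^d B_j(z),
\qquad A_j=I(P_{\ell_j},z),\quad B_j=I(P_{\ell_j-1},z),
\]
with $I(P_0,z)=1$. Paths are claw-free, so by the theorem of Chudnovsky and Seymour every $A_j$ and $B_j$ has only negative real zeros, and hence neither vanishes for $\Ree z\ge0$. The case $z=0$ is trivial, and for real $z>0$ the claim follows from positivity of the coefficients. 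By conjugation symmetry it then suffices to treat $z=\rho e^{i\theta}$ with $0<\theta\le\pi/2$ and $\rho>0$. For such $z$ the polynomial vanishes exactly when
\[
\prod_{j=1}^d\bigl(1+u_j\bigr)=-z,
\qquad u_j:=z\,\frac{B_{j}^{*}}{B_j},
\]
where $B_j^{*}=I(P_{\ell_j-2},z)$, using $A_j=B_j+zB_j^{*}$. For $\ell_j=1$ we set $u_j=0$, since then $A_j/B_j=1+z$ is handled by the convention $I(P_{-1},z)=1$; I will check the indexing, but the structure is what matters.

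The quantities $w_n:=zI(P_{n-1},z)/I(P_n,z)$ satisfy the Möbius recursion
\[
w_n=\frac{z}{1+w_{n-1}}, \qquad w_0=z \ \text{(or } w=0\text{ at the start)}.
\]
The first step is a sector invariance. If $0\le\arg w\le\theta$, then $0\le\arg(1+w)<\theta$, and therefore $0<\arg\bigl(z/(1+w)\bigr)\le\theta$. Hence every $u_j$ lies in the closed sector $\{0\le\arg u\le\theta\}$, and every factor satisfies $|1+u_j|\ge1$ and $0\le\arg(1+u_j)<\theta$.

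This alone does not finish the proof. The argument of $\prod(1+u_j)$ lies in $[0,d\theta)$, and for large $d$ this interval can contain $\theta+\pi=\arg(-z)$. So the second step must couple argument and modulus. Taking logarithms, we need to exclude
\[
\sum_{j=1}^d \log(1+u_j)=\log\rho+i(\theta+\pi)+2\pi i k .
\]
My plan is to find a refined invariant region $\Omega_z\subset\{0\le\arg u\le\theta\}$, stable under $u\mapsto z/(1+u)$ and containing the starting values $0$ and $z$, on which $\Imm\log(1+u)\le \kappa(\theta)\,\Ree\log(1+u)$ for a suitable slope $\kappa(\theta)$.

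The natural candidate is the region bounded by the arc through the fixed point $w^*$ of the Möbius map, where $w^*(1+w^*)=z$. The iterates $w_n$ converge to $w^*$ and, I expect, stay on one side of a circle through $0$, $z$ and $w^*$. Summing the inequality over the legs gives $\arg\prod(1+u_j)\le\kappa\log|\prod(1+u_j)|$. At a zero we have $\log|\prod(1+u_j)|=\log\rho$, while the argument must reach at least $\theta+\pi$. So it suffices that $\kappa\log\rho<\theta+\pi$, combined with the trivial bound $\arg<d\theta$. This splits into a small-$\rho$ case, where the factors are close to $1$ and the total argument is tiny, and a large-$\rho$ case, where I would use $|1+u_j|\approx|w^*|\approx\sqrt\rho$ to show the moduli grow too fast relative to the arguments. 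The imaginary-axis case $\theta=\pi/2$ is the extremal one and should be checked directly.

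I expect the main obstacle to be exactly this second step: choosing the invariant region and the constant $\kappa(\theta)$ so that the resulting inequality is uniform in $d$ and in the leg lengths. The pure sector bound is genuinely insufficient, and nonuniform leg lengths rule out using a single fixed point. My first attempt would be to compute $\arg(1+w)/\log|1+w|$ along the orbit explicitly for $z=i\rho$ and look for monotonicity in $n$. If this is too lossy, I would instead use the identity $z/(1+u_j)=w_{\ell_j}$ to rewrite the condition as $\prod_j w_{\ell_j}=-z^{d-1}\cdot z$ and compare the arguments of these terms with their moduli.
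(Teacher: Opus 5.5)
Your setup is sound. $I(S,z)=0$ exactly when $\prod_j H_{\ell_j}(z)=-z$ with $H_n=F_n/F_{n-1}=1+u$, and the sector invariance of $w\mapsto z/(1+w)$ is correct. There are two slips. For $\ell_j=1$ one has $A_j/B_j=1+z$, so $u_j=z$, not $0$. And $1+u_j=z/w_{\ell_j}$ turns the condition into $\prod_j w_{\ell_j}=-z^{d-1}$, not $-z^{d}$.

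The genuine gap is that the proposal stops exactly where the proof starts. The inequality coupling $\arg(1+u_j)$ with $\log|1+u_j|$ is never established, and the ingredients you suggest would not deliver it.
\begin{itemize}
\item A slope $\kappa(\theta)$ independent of $\rho$ cannot satisfy $\kappa\log\rho<\theta+\pi$ for all large $\rho$. The slope has to decay like $1/\log\rho$. At $\theta=\pi/2$ the required statement is precisely the paper's Lemma~\ref{lem:phase-gain}, $\log R_n(y)>\frac{2\beta_n(y)}{3\pi}\log y$ for $y\ge1$.
\item Your large-$\rho$ heuristic $|1+u_j|\approx|w^*|\approx\sqrt\rho$ is false for legs short compared with $\sqrt\rho$. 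For fixed $m$, $H_{2m}(z)\to m+1$ and $H_{2m-1}(z)\sim z/m$ as $|z|\to\infty$. For short even legs the modulus stays bounded, and the inequality survives only because the argument is $O(1/\rho)$. The behaviour of $H_n$ switches when $n$ is of order $\sqrt\rho$.
\end{itemize}
The paper needs a real case analysis here.
\begin{itemize}
\item For odd $n=2m-1$ it proves $R_{2m-1}(y)^3>y$. When $m$ is small it uses partial fractions with positive residues, coming from interlacing of the zeros of $F_{2m-3}$ and $F_{2m-2}$. When $m$ is large it uses the closed form of $H_n(iy)$ in terms of the roots of $w^2-w-iy$, together with a $\tanh$ bound.
\item For even $n=2m$ it writes $\log R_{2m}$ and $\beta_{2m}$ as integrals over the union of the interlacing intervals $[a_j,b_j]$. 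This gives $\log R_{2m}>\frac{4y}{(2m+1)^2}\beta_{2m}$. Once $(2m+1)^2>6\pi y/\log y$, it switches to $R_{2m}>\sqrt y\tanh\bigl((2m+1)L/2\bigr)>y^{1/3}$.
\end{itemize}
Nothing in your plan plays the role of these estimates. The invariant region bounded by an arc through $w^*$ is not constructed, and no inequality is proved on it.

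You are also attacking a harder problem than necessary. Treating every ray $\theta\in(0,\pi/2]$ would require an analogue of that lemma with threshold $\theta+\pi$ for each $\theta$, which neither you nor the paper has. The paper works only on the imaginary axis, by deforming $J_t=A+tB$:
\begin{itemize}
\item $J_0=A$ has only negative zeros.
\item When all legs are even, the degree jumps at $t=0$. Rouch\'e's theorem plus Vieta show the new zero enters near $-C/t$, so all zeros of $J_{t_0}$ lie in the open left half-plane for some small $t_0>0$.
\item The degree is then constant on $[t_0,1]$, so the zeros stay uniformly bounded.
\item If no $J_t$ vanishes on $i\mathbb R$, the argument principle keeps the number of zeros in the right half-plane equal to $0$ up to $t=1$.
\end{itemize}
Because of the factor $t$, the imaginary-axis statement actually needed is $\prod_jR_{\ell_j}(y)>y$ whenever $\sum_j\beta_{\ell_j}(y)\ge3\pi/2$. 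This is a strict inequality against $y$, not merely $\neq$, and it is what Lemma~\ref{lem:phase-gain} together with $R_n>1$ provides. Your remark that $\theta=\pi/2$ is ``extremal'' points in this direction. Without such a continuity argument, however, nothing lets you restrict to that case.

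Finally, your small-$\rho$ case is settled by moduli alone: $\prod_j|1+u_j|>1\ge\rho$ for $\rho\le1$. It is not settled by the arguments being small, which they need not be when $d$ is large.
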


%Thus the stability of stars persists under arbitrary and nonuniform
%subdivisions of their edges. Moreover, the conclusion is stronger
%than stability in the sense of Brown and Cameron, since independence
%roots on the imaginary axis are excluded as well.

The rest of this paper is organized as follows. Section~\ref{sec:prelim} develops the necessary facts about path
independence polynomials and the associated ratios.
Section~\ref{sec:mainproof} proves Theorem~\ref{thm:main}, and
Section~\ref{sec:phasegain} establishes the technical estimate used in
that proof.

\section{Path Polynomials and Preliminary Estimates}
\label{sec:prelim}

We begin with the independence polynomials of paths. For $n\ge0$, let
\[
F_n(z):=I(P_n,z).
\]
By considering whether an independent set contains an end
vertex of the path, we obtain
\begin{equation}\label{eq:path-rec}
	F_0(z)=1,\qquad F_1(z)=1+z,\qquad
	F_n(z)=F_{n-1}(z)+zF_{n-2}(z)\quad(n\ge2).
\end{equation}

The coefficients of $F_n$ also have a simple form. An independent
$k$-set of $P_n$ can be represented by integers
\[
1\le v_1<\cdots<v_k\le n,
\qquad
v_{r+1}\ge v_r+2.
\]
The substitution $u_r=v_r-(r-1)$ gives a bijection between such
sequences and the $k$-subsets of $\{1,\ldots,n+1-k\}$. Hence
\begin{equation}\label{eq:path-coeff}
	F_n(z)=
	\sum_{k=0}^{\lfloor(n+1)/2\rfloor}
	\binom{n+1-k}{k}z^k.
\end{equation}
In particular, for $m\ge1$,
\begin{equation}\label{eq:path-degree}
	\begin{aligned}
		\deg F_{2m-1}&=m,
		& [z^m]F_{2m-1}&=1,\\
		\deg F_{2m}&=m,
		& [z^m]F_{2m}&=m+1.
	\end{aligned}
\end{equation}

The zeros of the independence polynomial of a path are known
explicitly. We use the following formula from Alikhani and
Peng~\cite{AlikhaniPeng2011}. The real-rootedness of these polynomials also follows from the earlier work
of Brown, Hickman and Nowakowski~\cite{BrownHickmanNowakowski2004}.

\begin{proposition}\cite{AlikhaniPeng2011, BrownHickmanNowakowski2004}\label{prop:path-roots}
	For every $n\ge1$, the zeros of $F_n$ are the simple negative
	numbers
	\begin{equation}\label{eq:path-roots}
		-\frac{1}
		{4\cos^2\!\left(\frac{j\pi}{n+2}\right)},
		\qquad
		1\le j\le
		\left\lfloor\frac{n+1}{2}\right\rfloor.
	\end{equation}
	Equivalently,
	\begin{equation}\label{eq:path-factor}
		F_n(z)=
		\prod_{j=1}^{\lfloor(n+1)/2\rfloor}
		\left(
		1+4z\cos^2\frac{j\pi}{n+2}
		\right).
	\end{equation}
\end{proposition}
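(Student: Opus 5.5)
The plan is to solve the linear recurrence \eqref{eq:path-rec} in closed form along a trigonometric parametrization of the negative real axis, read off enough zeros, and then use a degree count to show there are no others.

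First I extend the recurrence backwards. Setting $F_{-1}:=1$ and $F_{-2}:=0$ is consistent with \eqref{eq:path-rec}, since $F_1=F_0+zF_{-1}$ and $F_0=F_{-1}+zF_{-2}$. The characteristic equation is $x^2=x+z$, with roots $x_\pm=\tfrac12\bigl(1\pm\sqrt{1+4z}\bigr)$. Whenever $x_+\neq x_-$, the initial values $F_{-2}=0$ and $F_{-1}=1$ give
\[
F_n(z)=\frac{x_+^{\,n+2}-x_-^{\,n+2}}{x_+-x_-}.
\]

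Next, for $\theta\in(0,\pi/2)$ I substitute $z=-1/(4\cos^2\theta)$. Then $1+4z=-\tan^2\theta$, so
\[
x_\pm=\frac{1\pm i\tan\theta}{2}=\frac{e^{\pm i\theta}}{2\cos\theta}.
\]
These are distinct, and the closed form becomes
\[
F_n\!\left(-\frac{1}{4\cos^2\theta}\right)=\frac{\sin\bigl((n+2)\theta\bigr)}{(2\cos\theta)^{n+1}\sin\theta}.
\]
This vanishes exactly when $\theta=j\pi/(n+2)$ with $j$ a positive integer. The condition $\theta<\pi/2$ forces $j<(n+2)/2$, that is, $1\le j\le\lfloor (n+1)/2\rfloor$. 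Since $\cos^2$ is strictly decreasing on $(0,\pi/2)$, these values of $j$ give pairwise distinct negative zeros $-1/\bigl(4\cos^2(j\pi/(n+2))\bigr)$.

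Finally, I compare with the degree. By \eqref{eq:path-coeff}, $\deg F_n=\lfloor(n+1)/2\rfloor$, which is exactly the number of distinct zeros just found. Hence these are all the zeros of $F_n$, and each is simple. Because $F_n(0)=1$, the polynomial equals the product of its normalized linear factors $\bigl(1-z/\zeta\bigr)$ over its zeros $\zeta$. With $-1/\zeta=4\cos^2(j\pi/(n+2))$, this is precisely \eqref{eq:path-factor}.

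The only delicate point is the bookkeeping in the index range: the count of admissible $j$ must match the degree for both parities of $n$. I expect this to be straightforward.
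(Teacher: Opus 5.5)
Your proof is correct, and the index bookkeeping you flagged does go through: for an integer $j$, the condition $j<(n+2)/2$ is equivalent to $2j\le n+1$, i.e.\ $j\le\lfloor (n+1)/2\rfloor$, for both parities at once.

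The paper gives no proof of this proposition. It imports the root formula from Alikhani--Peng and Brown--Hickman--Nowakowski, so your argument is a genuinely different, self-contained route. You use a Binet formula, then the substitution $z=-1/(4\cos^2\theta)$, which turns $F_n$ into $\sin((n+2)\theta)/\bigl((2\cos\theta)^{n+1}\sin\theta\bigr)$, and finally a degree count. This is the standard Chebyshev-type argument, and it fits the paper well for two reasons:
\begin{itemize}
\item Your closed form $F_n=(x_+^{n+2}-x_-^{n+2})/(x_+-x_-)$ is exactly the representation the paper derives anyway in Lemma~\ref{lem:ratio-representation}, there with $\lambda,\mu$ in place of $x_\pm$ and $z=iy$.
\item Your degree count needs only \eqref{eq:path-coeff}, which the paper proves by a bijection.
\end{itemize}
Including your argument would therefore make the paper self-contained at the cost of a few lines. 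It would also deliver in one step the two facts the later sections rely on: simplicity of the zeros, used for the partial fractions \eqref{eq:partial-fraction}, and their explicit trigonometric values, used for the interlacing \eqref{eq:rho-sigma} and \eqref{eq:ab-interlace}. The citation only buys brevity.
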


We shall use two consequences of Proposition~\ref{prop:path-roots}.
First, $F_n(z)\neq0$ whenever $\Ree z\ge0$. Second,
\eqref{eq:path-roots} determines the ordering of the zeros of
consecutive path polynomials. The particular orderings needed below
will be recorded in Section~\ref{sec:phasegain}.

We next express the independence polynomial of a spider in terms of
path polynomials. Let $S=S(\ell_1,\ldots,\ell_d)$ have center $c$.
If an independent set does not contain $c$, then the $j$th leg
contributes $F_{\ell_j}(z)$. If it contains $c$, then the vertex
adjacent to $c$ on each leg cannot be chosen, and the remaining part
of the $j$th leg contributes $F_{\ell_j-1}(z)$. Since the chosen
center contributes a factor $z$, we have
\begin{equation}\label{eq:spider-poly}
	I(S,z)=A(z)+B(z),
\end{equation}
where
\begin{equation}\label{eq:AB}
	A(z):=\prod_{j=1}^d F_{\ell_j}(z),
	\qquad
	B(z):=z\prod_{j=1}^d F_{\ell_j-1}(z).
\end{equation}

The two products in \eqref{eq:AB} naturally lead to ratios of
consecutive path polynomials. For $n\ge1$, define
\begin{equation}\label{eq:H-def}
	H_n(z):=\frac{F_n(z)}{F_{n-1}(z)},
	\qquad H_0(z):=1.
\end{equation}
%By Proposition~\ref{prop:path-roots}, $F_{n-1}$ has no zero in the
%closed right half-plane, so $H_n$ is well defined there.
For $n\ge2$, dividing \eqref{eq:path-rec} by $F_{n-1}(z)$ gives the
recurrence below. It also holds for $n=1$, since
$H_1(z)=F_1(z)/F_0(z)=1+z$ and $H_0(z)=1$. Thus
\begin{equation}\label{eq:H-rec}
	H_n(z)=1+\frac{z}{H_{n-1}(z)}
	\qquad(n\ge1).
\end{equation}

We first record a simple property of these ratios on the positive
imaginary axis.

\begin{lemma}\label{lem:first-quadrant}
	For every $n\ge1$ and $y>0$, the number $H_n(iy)$ lies in the open
	first quadrant and satisfies $|H_n(iy)|>1$. Consequently, there are
	unique numbers $R_n(y)>1$ and $\beta_n(y)\in(0,\pi/2)$ such that
	\[
	H_n(iy)=R_n(y)e^{i\beta_n(y)}.
	\]
\end{lemma}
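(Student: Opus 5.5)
Since $H_n(iy)$ is defined by the recurrence \eqref{eq:H-rec}, I will prove the statement by induction on $n$. The inductive hypothesis will be: $w:=H_{n-1}(iy)$ lies in the closed first quadrant (real and imaginary parts nonnegative) and has $|w|\ge1$. It is worth noting where the denominators $F_{n-1}(iy)$ could vanish. They never do, since $F_{n-1}$ has only negative real zeros by Proposition~\ref{prop:path-roots}. The recurrence also shows directly that $H_n(iy)$ is finite whenever $H_{n-1}(iy)\neq0$, which the modulus bound guarantees. The base case is $H_0=1$, which satisfies the weak hypothesis. The strict conclusions will then be checked for every $n\ge1$.

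For the inductive step, write $w=re^{i\theta}$ with $r\ge1$ and $\theta\in[0,\pi/2]$. Then
\[
H_n(iy)=1+\frac{iy}{w}=1+\frac{y}{r}\,e^{i(\pi/2-\theta)}.
\]
Since $\pi/2-\theta\in[0,\pi/2]$, the added term $\frac{y}{r}e^{i(\pi/2-\theta)}$ lies in the closed first quadrant. Its real part is $\frac{y}{r}\sin\theta\ge0$ and its imaginary part is $\frac{y}{r}\cos\theta\ge0$.

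It follows that $\Ree H_n(iy)\ge1>0$. For the modulus, $|H_n(iy)|\ge\Ree H_n(iy)\ge1$, and the inequality is strict as soon as $\Imm H_n(iy)>0$, because then $|H_n|^2=(\Ree H_n)^2+(\Imm H_n)^2>1$. This propagates the weak hypothesis to the next step, so the induction closes.

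The main obstacle is proving strict positivity of the imaginary part, $\Imm H_n(iy)=\frac{y}{r}\cos\theta>0$, which requires $\theta<\pi/2$. That is, $H_{n-1}(iy)$ must not lie on the positive imaginary axis. This is immediate, since the real part of every $H_k(iy)$ is at least $1$: for $k=0$ we have $H_0=1$, and for $k\ge1$ this was shown above. Hence $\theta\in[0,\pi/2)$, so $\cos\theta>0$, and $y>0$ gives $\Imm H_n(iy)>0$. Thus $H_n(iy)$ lies in the open first quadrant with modulus strictly greater than $1$.

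Finally, existence and uniqueness of $R_n(y)=|H_n(iy)|>1$ and $\beta_n(y)=\arg H_n(iy)\in(0,\pi/2)$ follow from the uniqueness of the polar form of a nonzero complex number, with the argument taken in $(-\pi,\pi]$.
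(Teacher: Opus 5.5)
Your proof is correct and follows essentially the same route as the paper. Both argue by induction on the recurrence $H_n(iy)=1+iy/H_{n-1}(iy)$: the term $iy/H_{n-1}(iy)$ lies in the first quadrant, so $\Ree H_n(iy)\ge1$ and $\Imm H_n(iy)>0$. The paper is slightly more streamlined, since it starts at $H_1(iy)=1+iy$ and inducts directly on the open-quadrant property (writing $H_{n-1}(iy)=a+ib$ with $a,b>0$). That gives the strict inequalities at once and avoids your separate weak-hypothesis and strictness bookkeeping.
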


\begin{proof}
	For $n=1$, we have $H_1(iy)=1+iy$, so the assertion is immediate.
	Suppose that $H_{n-1}(iy)=a+ib$ with $a,b>0$. By
	\eqref{eq:H-rec},
	\[
	H_n(iy)
	=1+\frac{iy}{a+ib}
	=1+\frac{yb}{a^2+b^2}
	+i\frac{ya}{a^2+b^2}.
	\]
	Hence
	\[
	\Ree H_n(iy)
	=1+\frac{yb}{a^2+b^2}>1,
	\qquad
	\Imm H_n(iy)
	=\frac{ya}{a^2+b^2}>0.
	\]
	Thus $H_n(iy)$ lies in the open first quadrant, and
	$|H_n(iy)|>\Ree H_n(iy)>1$. The result follows by induction.
\end{proof}

The following estimate is the main technical ingredient needed in the
proof of Theorem~\ref{thm:main}. It compares the modulus and argument
of $H_n(iy)$. Since its proof is considerably longer than the other
arguments in this section, we postpone it to
Section~\ref{sec:phasegain}.

\begin{lemma}\label{lem:phase-gain}
	For every $n\ge1$ and $y\ge1$, let
	$H_n(iy)=R_n(y)e^{i\beta_n(y)}$ as in
	Lemma~\ref{lem:first-quadrant}. Then
	\begin{equation}\label{eq:phase-gain}
		\log R_n(y)>
		\frac{2\beta_n(y)}{3\pi}\log y.
	\end{equation}
\end{lemma}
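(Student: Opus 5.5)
The plan is to combine two elementary lower bounds on $R_n(y)$: one that is strong when $\beta_n(y)$ is close to $\pi/2$, and one that is strong when $\beta_n(y)$ is small but $y$ is large. For $y=1$ the right-hand side of \eqref{eq:phase-gain} is $0$ while $R_n(1)>1$ by Lemma~\ref{lem:first-quadrant}, so we may assume $y>1$. Throughout, write $\beta_n=\beta_n(y)$ and $R_n=R_n(y)$.

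\emph{First bound.} The proof of Lemma~\ref{lem:first-quadrant} gives $\Ree H_n(iy)>1$, that is, $R_n\cos\beta_n>1$. Hence
\[
\log R_n>-\log\cos\beta_n .
\]
This settles the range where $\beta_n$ is large compared with $\log y$. Since $-\log\cos\beta\ge\beta^2/2$, it covers all $\beta_n\ge\frac{4}{3\pi}\log y$. It also covers part of the near-$\pi/2$ range, where $-\log\cos\beta_n\approx\log\frac{1}{\pi/2-\beta_n}$.

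\emph{Second bound, from the recurrence.} Rewriting \eqref{eq:H-rec} as $H_nH_{n-1}=H_{n-1}+iy$ gives two facts. First, $\Imm(H_{n-1}+iy)>y$, so
\[
R_nR_{n-1}>y .
\]
Second, taking arguments,
\[
\beta_n+\beta_{n-1}=\arg\bigl(H_{n-1}(iy)+iy\bigr)<\frac{\pi}{2}.
\]
So consecutive ratios share a total modulus exceeding $y$ and a total phase below $\pi/2$. I would then prove, by induction on $n$, a strengthened statement that also carries information on the pair $(R_{n-1},\beta_{n-1})$. The natural candidate is
\[
\log R_n-\frac{2\beta_n}{3\pi}\log y>\delta_n(y)\ge0 ,
\]
where the slack $\delta_n$ is fed by $\log R_n+\log R_{n-1}>\log y$.

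The induction step would split into two cases. If $\beta_{n-1}$ is small, then $\Imm H_n=y\cos\beta_{n-1}/R_{n-1}$ is large relative to $\Ree H_n$ unless $R_{n-1}$ is large. In the first alternative $R_n\gtrsim y/R_{n-1}$ and we get log-gain; in the second, $\beta_n$ is small. If $\beta_{n-1}$ is large, then by the phase-sum identity $\beta_n\le\pi/2-\beta_{n-1}$ is small, while $\Ree H_n=1+y\sin\beta_{n-1}/R_{n-1}$. The constant $\frac{2}{3\pi}$ should be what makes these cases close up: the sum $\frac{2}{3\pi}(\beta_n+\beta_{n-1})\log y<\frac13\log y$ must be dominated by the share of $\log y$ available from $\log(R_nR_{n-1})$.

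As a fallback, I would use the product formula \eqref{eq:path-factor} directly. It gives
\[
\beta_n=\sum_j\arctan(a_jy)-\sum_k\arctan(b_ky),
\qquad
\log R_n=\sum_j\psi(a_jy)-\sum_k\psi(b_ky),
\]
where $a_j=4\cos^2\frac{j\pi}{n+2}$, $b_k=4\cos^2\frac{k\pi}{n+1}$ and $\psi(t)=\frac12\log(1+t^2)$. Under the substitution $t=\tan\theta$ this becomes
\[
\log R_n=\int_\Theta\tan\theta\,d\theta ,
\]
where $\Theta\subset(0,\pi/2)$ has measure $\beta_n$ and is a union of intervals alternating by interlacing of the roots. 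The claim then reduces to showing that $\Theta$ cannot concentrate where $\tan\theta<\frac{2}{3\pi}\log y$.

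The main obstacle is exactly this small-$t$ regime: intervals $[b_ky,a_ky]$ with $a_k$ of order $(\log y)/y$, where the phase gain is not matched pointwise by modulus gain. I expect to handle it by the alternation: the gaps between consecutive intervals in $\Theta$ are at least comparable to the intervals themselves, so at most roughly half of $\arctan T$ lies below any level $T$. Making this quantitative with the explicit cosine spacing is where I expect the real work.
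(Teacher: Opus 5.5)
\textbf{There is a genuine gap.} Your proposal does not yet prove the lemma. The estimate carrying all the difficulty is deferred, and neither mechanism you offer for it closes as stated.

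\textbf{The recurrence route.} The facts $R_nR_{n-1}>y$ and $\beta_n+\beta_{n-1}<\pi/2$ are correct. With $D_k:=\log R_k-\frac{2\beta_k}{3\pi}\log y$, however, they only give $D_n+D_{n-1}>\frac23\log y$, and the slack may sit entirely in $D_{n-1}$. For $y>1$ one has $D_1>\frac12\log(1+y^2)-\frac13\log y>\frac23\log y$, so this says nothing about $D_2$.

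Worse, the inequality itself is not preserved by $w\mapsto 1+iy/w$. For $y>8$ and small $\epsilon>0$, the point $w=ye^{i\epsilon}$ satisfies it with large slack. Its image is close to $1+i$, which violates it because $\frac12\log 2<\frac16\log y$. So no hypothesis of the form $D_{n-1}>\delta_{n-1}\ge0$ can drive an induction.

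This is exactly where your case split breaks, in the alternative where $\beta_{n-1}$ is small and $R_{n-1}$ is large. There $H_n(iy)=1+\frac{y}{R_{n-1}}(\sin\beta_{n-1}+i\cos\beta_{n-1})$.
\begin{itemize}
\item If $R_{n-1}\approx y$, this lies near $1+i$, and the step fails as above.
\item If $R_{n-1}\gg y$, then $\beta_n$ is small, but $\Ree H_n(iy)$ is close to $1$. Hence $\log R_n\approx\beta_n\beta_{n-1}+\beta_n^2/2$, which is well below $\frac{2\beta_n}{3\pi}\log y$ for large $y$.
\end{itemize}
A small $\beta_n$ is precisely the failure mode, not a help. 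You would have to show such states never occur along the orbit, i.e.\ exhibit an explicit two-dimensional invariant region for $w\mapsto1+iy/w$, and you do not formulate one. Your first bound $\log R_n\ge-\log\cos\beta_n$ is fine, but the range $\beta_n\ge\frac{4}{3\pi}\log y$ it settles is empty once $\log y\ge 3\pi^2/8$.

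\textbf{The fallback.} The representation $\log R_n=\int_\Theta\tan\theta\,d\theta$ with $|\Theta|=\beta_n$ is correct. For even $n$ it is \eqref{eq:even-log-int}--\eqref{eq:even-arg-int} under $\tan\theta=y/t$. But the density heuristic you intend to finish with is false at the bottom. For odd $n$, $F_{n-1}$ has one zero fewer than $F_n$, so $\Theta\supset\bigl[0,\arctan\bigl(4y\sin^2\frac{\pi}{2(n+2)}\bigr)\bigr]$. This piece has full density, and $\tan\theta$ is arbitrarily small on it. Moreover, an upper density bound alone cannot suffice: you would also need a quantitative lower bound on the part of $\Theta$ where $\tan\theta>\frac{2}{3\pi}\log y$.

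\textbf{The missing idea.} In the regime you call the obstacle (large $n$ relative to $y$), stop comparing modulus with phase. By \eqref{eq:H-ratio}, $H_n(iy)=\lambda(1-\kappa^{n+2})/(1-\kappa^{n+1})$. Here $\lambda,\mu$ are the roots of $w^2-w-iy$ with $|\lambda|>|\mu|$, $\kappa=\mu/\lambda$ and $r=|\kappa|$. Hence $R_n\ge|\lambda|(1-r^{n+2})/(1+r^{n+1})$, with $|\lambda|=\sqrt{y/r}>\sqrt y$ and $-\log r>2/(3\sqrt y)$. This gives $R_n>y^{1/3}$ once $n$ is large, and then $\beta_n<\pi/2$ alone yields \eqref{eq:phase-gain}.

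In the complementary range the two parities behave differently.
\begin{itemize}
\item For even $n=2m$, your integral finishes the job via the pointwise bound $\tan\theta\ge 4y/(2m+1)^2$ on $\Theta$; for even $n$ the bottom of $\Theta$ is a gap.
\item Odd $n=2m-1$ needs a separate argument. The paper expands $F_{2m-3}/F_{2m-2}$ in partial fractions with positive residues (by interlacing). This gives $\Ree H_{2m-1}(iy)>1$ and $\Imm H_{2m-1}(iy)>y/m$, hence $R_{2m-1}^3>y$ for $m\le y/\sqrt{y^{2/3}-1}$.
\end{itemize}
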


\section{Stability of Spiders}
\label{sec:mainproof}

We shall use the following standard form of the argument principle;
see, for example, \cite{Fisher1990}.

\begin{theorem}\cite{Fisher1990}\label{thm:argument-principle}
	Let $D$ be a bounded domain whose boundary $\Gamma$ is a positively
	oriented piecewise smooth simple closed curve. Suppose that $f$ is
	holomorphic in a neighborhood of $\overline{D}$ and has no zero on
	$\Gamma$. Then
	\[
	\frac{1}{2\pi i}
	\int_{\Gamma}\frac{f'(z)}{f(z)}\,dz
	\]
	is the number of zeros of $f$ in $D$, counted with multiplicity.
\end{theorem}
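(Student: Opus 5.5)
The plan is the classical residue-theorem argument applied to the logarithmic derivative $f'/f$. We work throughout in an open neighborhood $U$ of $\overline{D}$ on which $f$ is holomorphic.

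First we show that $f$ has only finitely many zeros in $D$. Since $f$ has no zero on $\Gamma$, $f$ is not identically zero on the component of $U$ containing $\overline{D}$. Hence its zeros there are isolated. Suppose $f$ had infinitely many zeros in $D$. Because $\overline{D}$ is compact, they would accumulate at some point $p\in\overline{D}$. By continuity $f(p)=0$, so $p\notin\Gamma$ and thus $p\in D$. An accumulating zero would then force $f\equiv0$ near $p$, contradicting the identity theorem. So the zeros in $D$ are finitely many, say $a_1,\ldots,a_r$, with multiplicities $m_1,\ldots,m_r$.

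Next we compute the local residues. Near each $a_k$ write $f(z)=(z-a_k)^{m_k}g_k(z)$, where $g_k$ is holomorphic and $g_k(a_k)\neq0$. Then
\[
\frac{f'(z)}{f(z)}=\frac{m_k}{z-a_k}+\frac{g_k'(z)}{g_k(z)},
\]
and the second term is holomorphic near $a_k$. So $f'/f$ has a simple pole at $a_k$ with residue $m_k$. Away from the $a_k$, and in a slightly shrunken neighborhood of $\overline{D}$ that avoids zeros lying outside $\overline{D}$, the function $f'/f$ is holomorphic.

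Finally we apply Cauchy's theorem to the region $D$ with small disjoint closed disks around the $a_k$ removed. This yields
\[
\frac{1}{2\pi i}\int_\Gamma \frac{f'}{f}\,dz=\sum_{k=1}^r \frac{1}{2\pi i}\oint_{|z-a_k|=\varepsilon}\frac{f'}{f}\,dz=\sum_{k=1}^r m_k,
\]
which is the number of zeros in $D$ counted with multiplicity.

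The only delicate point is the global topological input. We need Cauchy's theorem, or equivalently the residue theorem, for a domain bounded by a positively oriented piecewise smooth simple closed curve. Concretely, $\Gamma$ must have winding number $1$ about every point of $D$ and $0$ about every point outside $\overline{D}$, which rests on the Jordan curve theorem. I would not reprove this. Instead I would quote the standard homology form of Cauchy's theorem for a cycle homologous to zero in the region of holomorphy, combined with the winding-number properties of a positively oriented Jordan curve. This is exactly the setting of \cite{Fisher1990}, so the statement can be cited there. In the application below, $\Gamma$ will be an explicit contour built from segments and arcs, for which these winding-number facts are elementary.
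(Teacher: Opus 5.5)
Your argument is correct. It is the standard residue-theorem proof of the argument principle: finiteness of the zeros in $D$ by compactness and the identity theorem, residue $m_k$ of $f'/f$ at each zero, and Cauchy's theorem on $D$ with small disks removed, with the Jordan-curve and winding-number input delegated to the reference. The paper gives no proof of its own and simply quotes this textbook result from Fisher; in its only application the contour is the boundary of a convex half-disk, where the winding-number facts you flag are elementary.
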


Let $S=S(\ell_1,\ldots,\ell_d)$ be a spider, and let $A(z)$ and $B(z)$ be as
in \eqref{eq:AB}, so that $I(S,z)=A(z)+B(z)$. Consider the
one-parameter family
\begin{equation}\label{eq:homotopy}
	J_t(z):=A(z)+tB(z),
	\qquad 0\le t\le1.
\end{equation}
Thus $J_0(z)=A(z)$, whose zeros are all negative real numbers, while
$J_1(z)=I(S,z)$.

We first show that no polynomial in this family has a zero on the
imaginary axis.

\begin{lemma}\label{lem:no-imaginary}
	For every $t\in[0,1]$ and every $y\in\mathbb R$,
	$J_t(iy)\neq0$.
\end{lemma}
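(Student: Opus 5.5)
The plan is to divide $J_t(iy)=A(iy)+tB(iy)$ by $A(iy)$ and show that the ratio $tB(iy)/A(iy)$ can never equal $-1$. The case $y=0$ is trivial, since $J_t(0)=A(0)=1$. By conjugate symmetry, $J_t(-iy)=\overline{J_t(iy)}$ because all coefficients are real, so it suffices to treat $y>0$. For $t=0$, Proposition~\ref{prop:path-roots} gives $A(iy)\neq0$, because every $F_n$ has only negative real zeros. Hence we may assume $t\in(0,1]$, $y>0$ and $A(iy)\ne0$. Then
\[
\frac{J_t(iy)}{A(iy)}=1+t\,\frac{iy}{\prod_{j=1}^d H_{\ell_j}(iy)}.
\]
Vanishing would require
\[
\prod_j H_{\ell_j}(iy)=-t\,iy .
\]
Taking moduli, this gives $\prod_j R_{\ell_j}(y)=ty\le y$. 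Taking arguments, it gives $\sum_j\beta_{\ell_j}(y)\equiv -\pi/2 \pmod{2\pi}$.

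By Lemma~\ref{lem:first-quadrant}, each $\beta_{\ell_j}\in(0,\pi/2)$. Therefore the sum $\Sigma:=\sum_j\beta_{\ell_j}(y)$ must equal $3\pi/2+2\pi k$ for some integer $k\ge0$. In particular $\Sigma\ge 3\pi/2$, which also forces $d\ge4$.

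Next we split according to the size of $y$. If $0<y<1$, then each $R_{\ell_j}>1$ by Lemma~\ref{lem:first-quadrant}. So $\prod_j R_{\ell_j}>1>y\ge ty$, which contradicts the modulus condition.

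If $y\ge1$, we sum the bound of Lemma~\ref{lem:phase-gain} over the legs:
\[
\log\prod_j R_{\ell_j}(y)>\frac{2\Sigma}{3\pi}\log y\ge \log y\ge\log(ty).
\]
The middle step uses $\Sigma\ge3\pi/2$ and $\log y\ge0$. This again contradicts $\prod_j R_{\ell_j}=ty$.

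For $y=1$ the strict inequality still works: $\log\prod R>0=\log y\ge \log t$. So both cases give a contradiction, and $J_t(iy)\ne0$.

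The substantive difficulty lies entirely in Lemma~\ref{lem:phase-gain}, which is assumed here. The proof of the present lemma itself is straightforward: it is just the bookkeeping that the argument condition forces $\Sigma\ge 3\pi/2$, precisely the threshold at which the constant $2/(3\pi)$ in \eqref{eq:phase-gain} makes the modulus bound contradict $\prod R_{\ell_j}=ty\le y$. The one point needing care is legs with $\ell_j=1$. There $F_0=1$, so $H_1=1+iy$ and the factorization above is valid; this case is already covered by Lemmas~\ref{lem:first-quadrant} and~\ref{lem:phase-gain}, since both hold for $n=1$.
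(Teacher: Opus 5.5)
Your proof is correct and follows essentially the same route as the paper. You reduce to $\prod_j H_{\ell_j}(iy)=-ity$, use the argument condition to force $\sum_j\beta_{\ell_j}(y)\ge 3\pi/2$, and then contradict $\prod_j R_{\ell_j}(y)=ty\le y$ via Lemma~\ref{lem:first-quadrant} for $0<y<1$ and Lemma~\ref{lem:phase-gain} for $y\ge1$; the only cosmetic differences are that you divide by $A(iy)$ rather than $\prod_j F_{\ell_j-1}(iy)$ and dispose of $t=0$ up front.
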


\begin{proof}
	Since $J_t(0)=1$, it is enough to consider $y\neq0$. The polynomial
	$J_t$ has real coefficients, so by conjugation symmetry we may assume
	that $y>0$.

	Suppose, to the contrary, that $J_t(iy)=0$. By
	Proposition~\ref{prop:path-roots}, together with $F_0=1$, none of the
	factors $F_{\ell_j-1}(iy)$ vanishes. Dividing by their product gives
	\begin{equation}\label{eq:boundary-product}
		\prod_{j=1}^d H_{\ell_j}(iy)=-ity.
	\end{equation}
	The left-hand side is nonzero, so necessarily $t>0$.

	Write $H_{\ell_j}(iy)=R_{\ell_j}(y)e^{i\beta_{\ell_j}(y)}$ as in
	Lemma~\ref{lem:first-quadrant}. Taking absolute values in
	\eqref{eq:boundary-product} gives
	$\prod_{j=1}^d R_{\ell_j}(y)=ty\le y$. On the other hand, taking
	arguments gives $
	\sum_{j=1}^d\beta_{\ell_j}(y)
	=\frac{3\pi}{2}+2k\pi$
	for some $k\ge0$. In particular,
	$\sum_{j=1}^d\beta_{\ell_j}(y)\ge3\pi/2$.

	If $0<y<1$, Lemma~\ref{lem:first-quadrant} gives
	$R_{\ell_j}(y)>1$ for every $j$. Hence
	$\prod_{j=1}^dR_{\ell_j}(y)>1>y$, contradicting
	$\prod_{j=1}^dR_{\ell_j}(y)\le y$.
	It remains to consider $y\ge1$. By Lemma~\ref{lem:phase-gain},
	\begin{align*}
		\log\prod_{j=1}^dR_{\ell_j}(y)
		=\sum_{j=1}^d\log R_{\ell_j}(y)
		>\frac{2\log y}{3\pi}
		\sum_{j=1}^d\beta_{\ell_j}(y)
		\ge\log y.
	\end{align*}
	Therefore $\prod_{j=1}^dR_{\ell_j}(y)>y$, again a contradiction.
\end{proof}
The next lemma provides a suitable starting point for the
zero-counting argument used in the proof of the main theorem.

\begin{lemma}\label{lem:starting-point}
	There exists $t_0\in[0,1)$ such that all zeros of $J_{t_0}(z)$ lie
	in the open left half-plane and the degree of $J_t(z)$ is constant
	for $t\in[t_0,1]$.
\end{lemma}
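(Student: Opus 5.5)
We compare the degrees of $A$ and $B$ using \eqref{eq:path-degree} and split into cases according to which is larger. Write $a_j=\lceil \ell_j/2\rceil$ for the degree of $F_{\ell_j}$. The degree of $F_{\ell_j-1}$ is $\lceil(\ell_j-1)/2\rceil$. This equals $a_j$ when $\ell_j$ is even and $a_j-1$ when $\ell_j$ is odd; for $\ell_j=1$ it is $0=a_j-1$, consistent with $F_0=1$. Let $p$ be the number of odd $\ell_j$. Then $\deg A=\sum_j a_j$ and $\deg B=1+\sum_j a_j-p$. Thus $\deg B>\deg A$ iff $p=0$, $\deg B=\deg A$ iff $p=1$, and $\deg B<\deg A$ iff $p\ge2$.

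\emph{Case $p\ge2$.} Here $\deg J_t=\deg A$ for every $t\in[0,1]$, since the leading term of $A$ is never cancelled. Take $t_0=0$. By Proposition~\ref{prop:path-roots}, all zeros of $J_0=A$ are negative reals.

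\emph{Case $p=1$.} The leading coefficient of $J_t$ is $a+tb$, where $a,b>0$ are the leading coefficients of $A$ and $B$. These are positive by \eqref{eq:path-coeff} and \eqref{eq:path-degree}. So $a+tb\neq0$, the degree is constant on $[0,1]$, and again $t_0=0$ works.

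\emph{Case $p=0$ (all $\ell_j$ even).} Now $\deg J_t=\deg A+1$ for all $t>0$, while $J_0=A$ has smaller degree, so $t_0$ must be positive and small. For small $t>0$ the zeros of $J_t$ consist of $\deg A$ zeros close to the zeros of $A$ plus one zero escaping to infinity.

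\begin{itemize}
\item \emph{Zeros near those of $A$.} The zeros of $A$ are negative reals, so by Hurwitz/continuity of roots on a compact disc containing them, the $\deg A$ nearby zeros of $J_t$ lie in the open left half-plane for $t$ small. On that disc $B$ is bounded, so $J_t$ has exactly $\deg A$ zeros there.
\item \emph{The escaping zero.} Write $J_t(z)=tb z^{N+1}+(a+tc)z^N+\dots$, where $N=\deg A$ and $c$ is the next coefficient of $B$. The sum of all roots is $-(a+tc)/(tb)$. The other roots stay bounded, so the large root is approximately $-a/(tb)$, a real number tending to $-\infty$.
\item \emph{Sign of the escaping zero.} Non-real roots of the real polynomial $J_t$ come in conjugate pairs. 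The escaping root is the unique root of large modulus, so it must be real, and it is negative because $J_t$ has positive coefficients.
\end{itemize}

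Choosing such a small $t_0>0$ gives all zeros in the open left half-plane and constant degree $N+1$ on $[t_0,1]$.

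The main obstacle is making the root-tracking in the case $p=0$ rigorous. We will apply Rouché's theorem on a disc $|z|\le M$, with $M$ exceeding the moduli of the zeros of $A$, to show that exactly $N$ zeros stay inside for small $t$ and that these are near the zeros of $A$. For the remaining root we use the conjugation and positivity argument above, which avoids any asymptotic estimate.
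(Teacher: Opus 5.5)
Your proposal is correct and follows essentially the paper's proof: the same case split by the number of odd legs, Rouch\'e on small discs around the negative zeros of $A$ to keep $\deg A$ zeros in the open left half-plane (your appeal to Hurwitz does this job; Rouch\'e on the single large disc $|z|\le M$ only gives the count $\deg A$, not the location), and the conjugate-pair argument that forces the one extra zero to be real. Your sign step simplifies the paper's: since $J_t$ has nonnegative coefficients and $J_t(0)=1$, one has $J_t(x)\ge1$ for $x\ge0$, so any real zero is negative, and the paper's Vieta estimate $\zeta_t=-C/t+O(1)$ becomes unnecessary.
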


\begin{proof}
	Let
	\[
	o:=\#\{j:\ell_j\text{ is odd}\}.
	\]
	If $\ell_j$ is even, then $F_{\ell_j}$ and $F_{\ell_j-1}$ have the
	same degree. If $\ell_j$ is odd, then the degree of
	$F_{\ell_j-1}$ is one less than that of $F_{\ell_j}$. Since $B(z)$
	has one additional factor $z$, \eqref{eq:path-degree} gives
	\begin{equation}\label{eq:degree-difference}
		\deg B(z)-\deg A(z)=1-o.
	\end{equation}

	Suppose first that $o\ge1$. If $o\ge2$, then
	$\deg B(z)<\deg A(z)$, and hence $J_t(z)=A(z)+tB(z)$ has the same degree as
	$A(z)$ for every $t\in[0,1]$. If $o=1$, then
	$\deg A(z)=\deg B(z)$. The leading coefficients of both $A(z)$ and $B(z)$
	are positive by \eqref{eq:path-coeff}, so their highest-degree
	terms cannot cancel in $A(z)+tB(z)$ for $t\ge0$. Thus $J_t(z)$ again has
	constant degree on $[0,1]$. In either case, $J_0(z)=A(z)$ is a product of path polynomials and
	therefore has only negative real zeros by
	Proposition~\ref{prop:path-roots}. Hence we may take $t_0=0$.

	It remains to consider $o=0$. Then every leg has even length.
	Write $\ell_j=2m_j$ and let
	\[
	M:=\sum_{j=1}^d m_j,
	\qquad
	C:=\prod_{j=1}^d(m_j+1).
	\]
	By \eqref{eq:path-degree},
	\begin{equation}\label{eq:all-even-leading}
		A(z)=Cz^M+O(z^{M-1}),
		\qquad
		B(z)=z^{M+1}+O(z^M).
	\end{equation}
	Hence $\deg J_0(z)=M$, whereas $\deg J_t(z)=M+1$ for every $t>0$.
	We shall show that all zeros of $J_t(z)$ lie in the open left
	half-plane when $t>0$ is sufficiently small.

	Let $r_1,\ldots,r_s$ be the distinct zeros of $A(z)$. By
	Proposition~\ref{prop:path-roots}, they are all negative real
	numbers. Choose pairwise disjoint closed disks
	$D_1,\ldots,D_s$, centered at these zeros and contained in the
	open left half-plane, with no zero of $A(z)$ on their boundaries.
	Let $\Gamma=\bigcup_{q=1}^s\partial D_q$. Since $\Gamma$ is
	compact,
	\[
	m_A:=\min_{z\in\Gamma}|A(z)|>0,
	\qquad
	M_B:=\max_{z\in\Gamma}|B(z)|<\infty.
	\]
	Choose $\varepsilon_0>0$ such that
	$\varepsilon_0M_B<m_A$. Then
	$|tB(z)|<|A(z)|$ on $\Gamma$ whenever
	$0<t\le\varepsilon_0$.

	By Rouch\'e's theorem~\cite{Fisher1990}, for
	$0<t\le\varepsilon_0$, the polynomial $J_t(z)=A(z)+tB(z)$ has exactly $M$ zeros, counted with
	multiplicity, in these disks. All of them therefore lie in the open
	left half-plane. Since $\deg J_t(z)=M+1$, there is exactly one remaining
	zero, say $\zeta_t$. Since $J_t(z)$ has real coefficients, this remaining zero must be real (nonreal zeros come in conjugate pairs).

	Let $b_M=[z^M]B$. By Vieta's formula,
	\[
	\sum_{J_t(\zeta)=0}\zeta
	=-\frac{C+tb_M}{t}
	=-\frac{C}{t}-b_M.
	\]
	The $M$ zeros in the disks remain bounded as $t\to0^+$. Hence
	\[
	\zeta_t=-\frac{C}{t}+O(1).
	\]
	Since $C>0$, we have $\zeta_t\to-\infty$ as $t\to0^+$.
	Hence, for all sufficiently small $t>0$, the remaining zero
	$\zeta_t$ also lies in the open left half-plane. Therefore all zeros
	of $J_t(z)$ lie in the open left half-plane for all sufficiently small
	$t>0$.

	Choose such a $t_0>0$. Since $\deg J_t(z)=M+1$ for every $t>0$, the
	degree of $J_t(z)$ is constant on $[t_0,1]$.
\end{proof}

We now prove the main result.
\begin{proof}[Proof of Theorem~\ref{thm:main}]
	Let $t_0$ be given by Lemma~\ref{lem:starting-point}, and let $N$
	be the common degree of $J_t(z)$ for $t\in[t_0,1]$. Write
	\[
	J_t(z)=a_N(t)z^N+\cdots+a_0(t).
	\]
	Since the coefficient functions are continuous on the compact
	interval $[t_0,1]$ and $a_N(t)\neq0$ there, there exist constants
	$c,M>0$ such that
	\[
	|a_N(t)|\ge c,
	\qquad
	|a_k(t)|\le M\quad(0\le k<N)
	\]
	for every $t\in[t_0,1]$.

	For $|z|\ge1$,
	\[
	\left|\sum_{k=0}^{N-1}a_k(t)z^k\right|
	\le NM|z|^{N-1}.
	\]
	Hence, if $|z|>\max\{1,NM/c\}$, then
	\[
	|a_N(t)z^N|
	>
	\left|\sum_{k=0}^{N-1}a_k(t)z^k\right|,
	\]
	so $J_t(z)\neq0$. Thus there is $R_0>0$ such that every zero of
	every $J_t(z)$, $t\in[t_0,1]$, lies in $|z|\le R_0$.

	Choose $R>R_0$, and let $\Gamma_R$ be the positively oriented
	boundary of
	\[
	D_R^+:=\{z\in\mathbb C:\Ree z>0,\ |z|<R\}.
	\]
	By the choice of $R$ and Lemma~\ref{lem:no-imaginary},
	$J_t(z)$ has no zero on $\Gamma_R$ for any $t\in[t_0,1]$.
	By Theorem~\ref{thm:argument-principle},
	\[
	N_+(t):=
	\frac{1}{2\pi i}
	\int_{\Gamma_R}
	\frac{J_t'(z)}{J_t(z)}\,dz
	\]
	is the number of zeros of $J_t(z)$ in the open right half-plane,
	counted with multiplicity.

	Since $J_t(z)\neq0$ on $\Gamma_R$ for all $t\in[t_0,1]$, the
	integrand depends continuously on $(t,z)$ on the compact set
	$[t_0,1]\times\Gamma_R$. Hence $N_+(t)$ is continuous in $t$.
	Since it is integer-valued, it is constant on $[t_0,1]$.

	By Lemma~\ref{lem:starting-point}, $N_+(t_0)=0$, and hence
	$N_+(1)=0$. Lemma~\ref{lem:no-imaginary} also excludes zeros of
	$J_1(z)$ on the imaginary axis. Since $J_1(z)=I(S,z)$, every
	independence root of $S$ has strictly negative real part.
\end{proof}

\section{Proof of Lemma \ref{lem:phase-gain}}
\label{sec:phasegain}

We now prove Lemma~\ref{lem:phase-gain}. We begin with two auxiliary
facts. The first gives a representation of $H_n(iy)$ that will be used
in both parity cases.

Fix $y>0$, and let $\lambda$ and $\mu$ be the roots of
\[
w^2-w-iy=0.
\]
Thus
\begin{equation}\label{eq:fixed-vieta}
	\lambda+\mu=1,
	\qquad
	\lambda\mu=-iy.
\end{equation}
Since $y>0$, the two roots are distinct. They also cannot have the
same modulus. Indeed, if $|\lambda|=|\mu|$, then
$|\lambda|=|1-\lambda|$, and hence $\Ree\lambda=1/2$. It follows that
$\mu=1-\lambda=\overline{\lambda}$, so $\lambda\mu$ would be real,
contrary to \eqref{eq:fixed-vieta}. We therefore label the roots so
that $|\lambda|>|\mu|$, and set
\[
\kappa:=\frac{\mu}{\lambda},
\qquad
r:=|\kappa|\in(0,1),
\qquad
L:=-\log r>0.
\]

\begin{lemma}\label{lem:ratio-representation}
	For every $n\ge0$,
	\begin{equation}\label{eq:H-ratio}
		H_n(iy)
		=
		\lambda\,
		\frac{1-\kappa^{n+2}}{1-\kappa^{n+1}}.
	\end{equation}
	Moreover,
	\begin{equation}\label{eq:y-r}
		y=\frac{r(1+r^2)}{(1-r^2)^2},
		\qquad
		|\lambda|=\sqrt{\frac{y}{r}}.
	\end{equation}
	If $a:=(1-r)/(1+r)$, then
	\begin{equation}\label{eq:y-a}
		y=\frac{1-a^4}{8a^2},
		\qquad
		a^2=
		\frac{1}{\sqrt{16y^2+1}+4y},
		\qquad
		L=2\operatorname{artanh}a>2a.
	\end{equation}
\end{lemma}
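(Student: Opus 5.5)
The plan is to solve the linear recurrence \eqref{eq:path-rec} at $z=iy$ explicitly and then read off the stated identities by direct algebra.

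For \eqref{eq:H-ratio}, the characteristic equation of \eqref{eq:path-rec} at $z=iy$ is $w^2-w-iy=0$, whose roots are $\lambda,\mu$. Since these roots are distinct, $F_n(iy)=c_1\lambda^{n}+c_2\mu^{n}$ for constants $c_1,c_2$. I expect the closed form
\[
F_n(iy)=\frac{\lambda^{n+2}-\mu^{n+2}}{\lambda-\mu}.
\]
To confirm it, I will check the initial values. At $n=0$ the expression equals $\lambda+\mu=1$. At $n=1$ it equals $\lambda^2+\lambda\mu+\mu^2=(\lambda+\mu)^2-\lambda\mu=1+iy$. Both checks use \eqref{eq:fixed-vieta}, and the recurrence holds automatically because $\lambda^2=\lambda+iy$ and $\mu^2=\mu+iy$. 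Dividing consecutive terms and pulling out powers of $\lambda$ then gives \eqref{eq:H-ratio} for $n\ge1$. For $n=0$ the same formula gives $\lambda(1-\kappa^2)/(1-\kappa)=\lambda+\mu=1=H_0$, so it agrees with the convention $H_0=1$. The denominators never vanish because $|\kappa|<1$.

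For \eqref{eq:y-r}, I will use $\lambda+\mu=1$, so $\lambda(1+\kappa)=1$, together with $\lambda^2\kappa=\lambda\mu=-iy$. Combining these gives
\[
\kappa=-iy(1+\kappa)^2.
\]
Taking moduli yields $|\lambda|^2 r=y$, which is the second identity.

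To get $y$ in terms of $r$ alone, I will write $\kappa=re^{i\theta}$. The key observation is that $\kappa/(1+\kappa)^2$ is purely imaginary, and this pins down $\theta$. The function
\[
\frac{1}{\kappa}\bigl(1+\kappa\bigr)^2=\kappa+\kappa^{-1}+2
\]
must equal $i/y$, so it has zero real part. Writing $\kappa+\kappa^{-1}=(r+r^{-1})\cos\theta+i(r-r^{-1})\sin\theta$, the real-part condition is $(r+r^{-1})\cos\theta=-2$. The imaginary part then gives
\[
\frac{1}{y}=(r^{-1}-r)|\sin\theta|,
\]
up to choosing the sign convention. Substituting $\cos\theta=-2r/(1+r^2)$, we get
\[
\sin^2\theta=1-\frac{4r^2}{(1+r^2)^2}=\frac{(1-r^2)^2}{(1+r^2)^2}.
\]
Hence $1/y=(1-r^2)^2/\bigl(r(1+r^2)\bigr)$, as claimed.

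For \eqref{eq:y-a}, I will substitute $r=(1-a)/(1+a)$. This gives $1-r^2=4a/(1+a)^2$ and $1+r^2=2(1+a^2)/(1+a)^2$. Simplifying yields $y=(1-a^2)(1+a^2)/(8a^2)=(1-a^4)/(8a^2)$. This is a quadratic in $a^2$, namely $a^4+8ya^2-1=0$. Its positive root is $a^2=\sqrt{16y^2+1}-4y$, which after rationalizing equals $1/(\sqrt{16y^2+1}+4y)$.

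Finally, $L=-\log r=\log\frac{1+a}{1-a}=2\operatorname{artanh}a$. The inequality $L>2a$ then follows from the power series of $\operatorname{artanh}$, since $0<a<1$.

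The only delicate step is the determination of $\theta$, that is, the sign bookkeeping in the argument of $\kappa$. This is harmless, because only moduli enter the final formulas.
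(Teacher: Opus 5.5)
Your proposal is correct and follows essentially the same route as the paper: you verify the closed form $F_n(iy)=(\lambda^{n+2}-\mu^{n+2})/(\lambda-\mu)$ from the initial values, derive $\kappa/(1+\kappa)^2=-iy$ with $\cos\theta=-2r/(1+r^2)$, and substitute $r=(1-a)/(1+a)$. The only cosmetic difference is that you read off $y$ from the imaginary part of $\kappa+\kappa^{-1}+2=i/y$ (where $\sin\theta<0$ is in fact forced, since $r<1$ and $y>0$), whereas the paper computes $|1+\kappa|^2=(1-r^2)^2/(1+r^2)$ and combines it with $y=r|\lambda|^2$.
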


\begin{proof}
	Since $\lambda$ and $\mu$ satisfy $w^2=w+iy$, for $n\ge2$ we have
	\[
	\lambda^{n+2}=\lambda^{n+1}+iy\,\lambda^n,
	\qquad
	\mu^{n+2}=\mu^{n+1}+iy\,\mu^n.
	\]
	Hence the sequence
	\[
	G_n:=
	\frac{\lambda^{n+2}-\mu^{n+2}}{\lambda-\mu}
	\]
	satisfies
	\[
	G_n=G_{n-1}+iy\,G_{n-2}
	\qquad(n\ge2),
	\]
	which is the same recurrence as $F_n(iy)$. Moreover,
	\[
	G_0=\lambda+\mu=1,
	\qquad
	G_1
	=\lambda^2+\lambda\mu+\mu^2
	=(\lambda+\mu)^2-\lambda\mu
	=1+iy.
	\]
	Thus $G_0=F_0(iy)$ and $G_1=F_1(iy)$, and the recurrence gives
	\[
	F_n(iy)
	=
	\frac{\lambda^{n+2}-\mu^{n+2}}{\lambda-\mu}
	\qquad(n\ge0).
	\]
	For $n\ge1$, dividing consecutive terms yields
	\[
	H_n(iy)
	=
	\lambda
	\frac{1-(\mu/\lambda)^{n+2}}
	{1-(\mu/\lambda)^{n+1}},
	\]
	which is \eqref{eq:H-ratio}. For $n=0$, the right-hand side of
	\eqref{eq:H-ratio} equals
	\[
	\lambda\frac{1-\kappa^2}{1-\kappa}
	=\lambda(1+\kappa)
	=\lambda+\mu
	=1
	=H_0(iy),
	\]
	so the formula also holds for $n=0$.

	We next relate $r$ to $y$. Since $\mu=\kappa\lambda$ and
	$\lambda+\mu=1$, we have $\lambda=(1+\kappa)^{-1}$. Together with
	$\lambda\mu=-iy$, this gives
	\[
	\frac{\kappa}{(1+\kappa)^2}=-iy.
	\]
	Write $\kappa=re^{i\theta}$. Since the right-hand side is purely
	imaginary,
	\[
	\Ree\!\bigl(\kappa(1+\overline{\kappa})^2\bigr)=0.
	\]
	Expanding the real part gives
	\[
	r(1+r^2)\cos\theta+2r^2=0,
	\]
	and hence
	\[
	\cos\theta=-\frac{2r}{1+r^2}.
	\]
	It follows that
	\[
	|1+\kappa|^2
	=1+r^2+2r\cos\theta
	=\frac{(1-r^2)^2}{1+r^2}.
	\]
	Therefore
	\[
	|\lambda|^2
	=\frac{1+r^2}{(1-r^2)^2}.
	\]
	On the other hand, taking absolute values in
	$\lambda\mu=-iy$ gives $y=r|\lambda|^2$. These two relations prove
	\eqref{eq:y-r}.

	Finally, let $a=(1-r)/(1+r)$. Then $r=(1-a)/(1+a)$, and
	substitution into \eqref{eq:y-r} gives
	\[
	y=\frac{1-a^4}{8a^2}.
	\]
	Equivalently, $a^4+8ya^2-1=0$, so
	\[
	a^2
	=\sqrt{16y^2+1}-4y
	=\frac{1}{\sqrt{16y^2+1}+4y}.
	\]
	Also,
	\[
	L=-\log r
	=\log\frac{1+a}{1-a}
	=2\operatorname{artanh}a.
	\]
	Since
	\[
	\operatorname{artanh}a
	=\int_0^a\frac{dt}{1-t^2}>a,
	\]
	we obtain $L>2a$.
\end{proof}

The second auxiliary fact is an elementary estimate that will be used
in the even case.

\begin{lemma}\label{lem:tanh-estimate}
	For every $s>0$,
	\[
	\tanh\!\sqrt{\frac{2\pi}{3s}}>e^{-s/6}.
	\]
\end{lemma}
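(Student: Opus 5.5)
The plan is to substitute $u:=\sqrt{2\pi/(3s)}$, which maps $s\in(0,\infty)$ bijectively onto $u\in(0,\infty)$ with $s/6=\pi/(9u^2)$. The claim then becomes
\[
\log\coth u<\frac{\pi}{9u^2}\qquad(u>0),
\]
obtained by taking $-\log$ of both sides, since $\tanh u\in(0,1)$. I would prove this with one crude but uniform bound on the left-hand side, with no case split.

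First, write $q:=e^{-2u}\in(0,1)$. Then $\coth u=(1+q)/(1-q)$, so
\[
\log\coth u=2\operatorname{artanh}q=2\sum_{k\ge0}\frac{q^{2k+1}}{2k+1}.
\]
Bounding each term by $q^{2k+1}$ gives
\[
\log\coth u\le\frac{2q}{1-q^2}=\frac{2}{e^{2u}-e^{-2u}}=\frac{1}{\sinh 2u}.
\]
The inequality is strict because the terms with $k\ge1$ are strictly smaller.

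Second, with $v:=2u>0$, it suffices to show $u^2/\sinh(2u)\le\pi/9$. This is the same as $v^2/\sinh v\le 4\pi/9$. Using $\sinh v\ge v+v^3/6$ from the Taylor series with positive coefficients, I get
\[
\frac{v^2}{\sinh v}\le\frac{v}{1+v^2/6}.
\]
The right-hand side attains its maximum at $v=\sqrt6$, where it equals $\sqrt6/2<1.23$. Since $4\pi/9>1.39$, this gives $\log\coth u<1/\sinh 2u\le\pi/(9u^2)$. Exponentiating and undoing the substitution yields the lemma.

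I expect no real obstacle. The only point needing care is the first step: choosing the right closed-form upper bound for $\log\coth u$. It must work both as $u\to0$, where the left side grows only like $-\log u$, and as $u\to\infty$, where it decays like $2e^{-2u}$. The bound $1/\sinh 2u$ does both, since it behaves like $1/(2u)$ near $0$ and like $2e^{-2u}$ at infinity. After that, the remaining one-variable maximization leaves a comfortable numerical margin.
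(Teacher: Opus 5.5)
Your proof is correct, but after the shared first step it follows a different route from the paper's.

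Both proofs expand $\log\coth x=2\sum_{k\ge0}e^{-2(2k+1)x}/(2k+1)$. Both then reduce the lemma to $x^2\log\coth x<\pi/9$ for all $x>0$.

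The paper keeps the weights $1/(2k+1)$ and maximizes each term separately, using $x^2e^{-2qx}\le e^{-2}/q^2$. This yields $x^2\log\coth x\le\frac{2}{e^2}\sum_{k\ge0}(2k+1)^{-3}$. The paper then needs a numerical bound on this sum of reciprocal odd cubes, obtained by an integral comparison, together with $e^2>7$.

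You instead drop the weights first and sum the geometric series. This gives the closed-form bound $\log\coth u<1/\sinh 2u$. After that you only need $\sinh v\ge v+v^3/6$ and the elementary maximization of $v/(1+v^2/6)$.

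Your version avoids the infinite sum and its tail estimate, and it isolates a clean hyperbolic inequality. The paper's termwise maximization works directly on the series without requiring it to sum in closed form.

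Both end with a comfortable margin below $\pi/9\approx0.349$. Your final constant is $\sqrt6/8\approx0.306$, and the paper's is $20/(9e^2)\approx0.301$.
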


\begin{proof}
	For $x>0$, putting $u=e^{-2x}$ and using the power series for
	$\log(1+u)$ and $-\log(1-u)$, we obtain
	\[
	\log\coth x
	=
	\log\frac{1+u}{1-u}
	=
	2\sum_{k=0}^{\infty}
	\frac{e^{-2(2k+1)x}}{2k+1}.
	\]
	For every positive integer $q$, the function
	$x^2e^{-2qx}$ attains its maximum at $x=1/q$, with maximum value
	$e^{-2}/q^2$. Hence
	\[
	x^2\log\coth x
	\le
	\frac{2}{e^2}
	\sum_{k=0}^{\infty}\frac{1}{(2k+1)^3}.
	\]
	Moreover,
	\[
	\sum_{k=0}^{\infty}\frac{1}{(2k+1)^3}
	<
	1+\frac1{27}+\int_3^\infty t^{-3}\,dt
	=
	\frac{59}{54}
	<
	\frac{10}{9}.
	\]
	Therefore, using $e^2>7$ and $\pi>3$,
	\[
	x^2\log\coth x
	<
	\frac{20}{9e^2}
	<
	\frac{20}{63}
	<
	\frac13
	<
	\frac{\pi}{9}.
	\]
	
	Now take $x=\sqrt{\frac{2\pi}{3s}}.$
	Then $\pi/(9x^2)=s/6$, and so $
	\log\coth x<s/6$.
	Exponentiating and taking reciprocals gives 
	$\tanh x>e^{-s/6},$
	which proves the result.
\end{proof}

We are now ready to prove Lemma~\ref{lem:phase-gain}.

\begin{proof}[Proof of Lemma~\ref{lem:phase-gain}]
	If $y=1$, then $R_n(1)>1$ by
	Lemma~\ref{lem:first-quadrant}, while the right-hand side of
	\eqref{eq:phase-gain} is zero. We may therefore assume that
	$y>1$.

	We first record a consequence of \eqref{eq:y-a} that will be used
	in both parity cases. Since $y>1$,
	$\sqrt{16y^2+1}<5y$, and hence
	\[
	a^2
	=\frac{1}{\sqrt{16y^2+1}+4y}
	>\frac{1}{9y}.
	\]
	Together with $L>2a$, this gives
	\begin{equation}\label{eq:L-lower}
		L>\frac{2}{3\sqrt y}.
	\end{equation}

	\medskip
	\noindent\textbf{Odd indices.}
	Let $n=2m-1$. We prove the stronger estimate
	\begin{equation}\label{eq:odd-gain}
		R_{2m-1}(y)^3>y.
	\end{equation}
	Once this is proved, \eqref{eq:phase-gain} follows from
	$\beta_{2m-1}(y)<\pi/2$.

	For $m=1$, we have $H_1(iy)=1+iy$, and hence
	\[
	R_1(y)^3=(1+y^2)^{3/2}>y.
	\]
	We may therefore assume that $m\ge2$.

	By Proposition~\ref{prop:path-roots}, write the zeros of
	$F_{2m-2}$ as $-\rho_1,\ldots,-\rho_{m-1}$ and those of
	$F_{2m-3}$ as $-\sigma_1,\ldots,-\sigma_{m-1}$, where
	\[
	\rho_j:=
	\frac{1}{4\cos^2\!\left(\frac{j\pi}{2m}\right)},
	\qquad
	\sigma_j:=
	\frac{1}{4\cos^2\!\left(\frac{j\pi}{2m-1}\right)}.
	\]
	Since $1/(4\cos^2x)$ is strictly increasing on $(0,\pi/2)$ and
	\[
	\frac{j}{2m}
	<
	\frac{j}{2m-1}
	<
	\frac{j+1}{2m}
	\qquad(1\le j\le m-2),
	\]
	while
	\[
	\frac{m-1}{2m}<\frac{m-1}{2m-1},
	\]
	we obtain
	\begin{equation}\label{eq:rho-sigma}
		0<\rho_1<\sigma_1<\rho_2<\sigma_2<\cdots
		<\rho_{m-1}<\sigma_{m-1}.
	\end{equation}

	Both $F_{2m-3}$ and $F_{2m-2}$ have degree $m-1$, with leading
	coefficients $1$ and $m$, respectively. Since the zeros of
	$F_{2m-2}$ are simple, we may write
	\begin{equation}\label{eq:partial-fraction}
		\frac{F_{2m-3}(z)}{F_{2m-2}(z)}
		=
		\frac1m+
		\sum_{j=1}^{m-1}\frac{c_j}{z+\rho_j},
	\end{equation}
	where
	\[
	c_j
	=
	\frac{F_{2m-3}(-\rho_j)}
	{F'_{2m-2}(-\rho_j)}.
	\]
	The ordering \eqref{eq:rho-sigma} implies that $c_j>0$. Indeed,
	up to their positive leading coefficients,
	\[
	F_{2m-3}(-\rho_j)
	=\prod_{k=1}^{m-1}(\sigma_k-\rho_j),
	\]
	while
	\[
	F'_{2m-2}(-\rho_j)
	=
	m\prod_{\substack{1\le k\le m-1\\k\ne j}}
	(\rho_k-\rho_j).
	\]
	In each product exactly $j-1$ factors are negative. Thus the
	numerator and denominator in the expression for $c_j$ have the
	same sign, and hence $c_j>0$.

	By the path recurrence,
	\[
	H_{2m-1}(z)
	=
	1+
	z\frac{F_{2m-3}(z)}{F_{2m-2}(z)}.
	\]
	Using \eqref{eq:partial-fraction} at $z=iy$, we obtain
	\[
	H_{2m-1}(iy)
	=
	1+\frac{iy}{m}
	+\sum_{j=1}^{m-1}
	c_j\frac{iy}{\rho_j+iy}.
	\]
	For each $j$,
	\[
	\frac{iy}{\rho_j+iy}
	=
	\frac{y^2}{\rho_j^2+y^2}
	+i\frac{y\rho_j}{\rho_j^2+y^2},
	\]
	whose real and imaginary parts are both positive. Since $c_j>0$,
	\begin{equation}\label{eq:odd-real-imag}
		\Ree H_{2m-1}(iy)>1,
		\qquad
		\Imm H_{2m-1}(iy)>\frac{y}{m}.
	\end{equation}
	It follows that
	\[
	R_{2m-1}(y)^2
	>
	1+\frac{y^2}{m^2}.
	\]

	If
	\[
	m\le
	\frac{y}{\sqrt{y^{2/3}-1}},
	\]
	then
	\[
	R_{2m-1}(y)^2
	>
	1+\frac{y^2}{m^2}
	\ge
	y^{2/3},
	\]
	and therefore \eqref{eq:odd-gain} holds.

	It remains to consider
	\[
	m>
	\frac{y}{\sqrt{y^{2/3}-1}}.
	\]
	By Lemma~\ref{lem:ratio-representation},
	\[
	R_{2m-1}(y)
	=
	|\lambda|
	\frac{|1-\kappa^{2m+1}|}
	{|1-\kappa^{2m}|}.
	\]
	Using $|1-z|\ge1-|z|$ and $|1-z|\le1+|z|$, we obtain
	\[
	R_{2m-1}(y)
	\ge
	|\lambda|
	\frac{1-r^{2m+1}}{1+r^{2m}}
	>
	|\lambda|
	\frac{1-r^{2m}}{1+r^{2m}}.
	\]
	Since $|\lambda|=\sqrt{y/r}$ and $r=e^{-L}$,
	\begin{equation}\label{eq:odd-tanh}
		\frac{R_{2m-1}(y)}{y^{1/3}}
		>
		A\tanh(mL),
		\qquad
		A:=y^{1/6}r^{-1/2}.
	\end{equation}

	We first bound $A$. By \eqref{eq:y-r},
	\[
	A^6
	=
	\frac{1+r^2}{r^2(1-r^2)^2}.
	\]
	Putting $u=r^2\in(0,1)$ gives
	\[
	A^6
	>
	\frac{1}{u(1-u)^2}.
	\]
	The function $u(1-u)^2$ attains its maximum $4/27$ on $[0,1]$
	at $u=1/3$. Consequently,
	\[
	A^6>\frac{27}{4}>
	\left(\frac43\right)^6,
	\]
	and hence
	\begin{equation}\label{eq:A-fourthirds}
		A>\frac43.
	\end{equation}

	We next show that $mL>1$. For $y>1$,
	\begin{equation}\label{eq:elementary-y}
		y^{2/3}-1<\frac49y.
	\end{equation}
	Indeed, after division by $y$, this is equivalent to
	\[
	h(y):=y^{-1/3}-y^{-1}<\frac49.
	\]
	Now
	\[
	h'(y)
	=
	y^{-2}\left(1-\frac13y^{2/3}\right),
	\]
	so $h$ attains its maximum on $(1,\infty)$ at
	$y=3\sqrt3$, where
	\[
	h(3\sqrt3)
	=\frac{2}{3\sqrt3}
	<\frac49.
	\]
	This proves \eqref{eq:elementary-y}.

	Using the present assumption on $m$, together with
	\eqref{eq:L-lower} and \eqref{eq:elementary-y}, we obtain
	\[
	mL
	>
	\frac{y}{\sqrt{y^{2/3}-1}}
	\frac{2}{3\sqrt y}
	=
	\frac{2\sqrt y}
	{3\sqrt{y^{2/3}-1}}
	>1.
	\]
	Therefore, by \eqref{eq:odd-tanh} and
	\eqref{eq:A-fourthirds},
	\[
	\frac{R_{2m-1}(y)}{y^{1/3}}
	>
	\frac43\tanh1
	>1,
	\]
	where the last inequality follows from
	$\tanh1=(e^2-1)/(e^2+1)>3/4$. Thus
	\eqref{eq:odd-gain} also holds in this range.

	We have proved \eqref{eq:odd-gain} for every $m\ge1$. Since
	$\beta_{2m-1}(y)<\pi/2$ and $y>1$,
	\[
	\log R_{2m-1}(y)
	>
	\frac13\log y
	>
	\frac{2\beta_{2m-1}(y)}{3\pi}\log y.
	\]
	Hence \eqref{eq:phase-gain} holds for odd $n$.

	\medskip
	\noindent\textbf{Even indices.}
	Let $n=2m$. Define
	\[
	a_j:=
	\frac{1}{4\cos^2\!\left(\frac{j\pi}{2m+2}\right)},
	\qquad
	b_j:=
	\frac{1}{4\cos^2\!\left(\frac{j\pi}{2m+1}\right)}
	\qquad(1\le j\le m).
	\]
	For $1\le j\le m-1$,
	\[
	\frac{j}{2m+2}
	<
	\frac{j}{2m+1}
	<
	\frac{j+1}{2m+2},
	\]
	and also
	\[
	\frac{m}{2m+2}
	<
	\frac{m}{2m+1}.
	\]
	Thus Proposition~\ref{prop:path-roots} and the monotonicity of
	$1/(4\cos^2x)$ on $(0,\pi/2)$ give
	\begin{equation}\label{eq:ab-interlace}
		0<a_1<b_1<a_2<b_2<\cdots<a_m<b_m.
	\end{equation}
	By \eqref{eq:path-factor},
	\begin{equation}\label{eq:H-even-product}
		H_{2m}(z)
		=
		\prod_{j=1}^m
		\frac{1+z/a_j}{1+z/b_j}.
	\end{equation}
	Set
	\[
	E_m:=\bigcup_{j=1}^m[a_j,b_j].
	\]

	For $0<a<b$, direct differentiation gives
	\[
	\log\left|
	\frac{1+iy/a}{1+iy/b}
	\right|
	=
	\int_a^b
	\frac{y^2}{t(t^2+y^2)}\,dt
	\]
	and
	\[
	\arctan\frac{y}{a}
	-\arctan\frac{y}{b}
	=
	\int_a^b
	\frac{y}{t^2+y^2}\,dt.
	\]
	Summing these identities over $j$ gives
	\begin{align}
		\log R_{2m}(y)
		&=
		\int_{E_m}
		\frac{y^2}{t(t^2+y^2)}\,dt,
		\label{eq:even-log-int}\\
		\beta_{2m}(y)
		&=
		\int_{E_m}
		\frac{y}{t^2+y^2}\,dt.
		\label{eq:even-arg-int}
	\end{align}
	To justify the second identity, let $\Theta(y)$ denote the sum of
	the factor arguments in \eqref{eq:H-even-product}. Then
	$\Theta(y)$ is continuous for $y\ge0$, tends to $0$ as
	$y\to0^+$, and is an argument of $H_{2m}(iy)$. By
	Lemma~\ref{lem:first-quadrant}, $H_{2m}(iy)$ remains in the open
	first quadrant for $y>0$, so $\beta_{2m}(y)$ is also continuous
	and tends to $0$ as $y\to0^+$. Hence
	$\Theta(y)-\beta_{2m}(y)$ is a continuous function taking values
	in $2\pi\mathbb Z$ and tending to $0$ as $y\to0^+$.
	Therefore $\Theta(y)=\beta_{2m}(y)$.

	The largest point of $E_m$ is
	\[
	b_m
	=
	\frac{1}
	{4\sin^2\!\left(\frac{\pi}{4m+2}\right)}.
	\]
	Since $\sin x\ge2x/\pi$ for $0\le x\le\pi/2$,
	\begin{equation}\label{eq:bmax}
		b_m\le\frac{(2m+1)^2}{4}.
	\end{equation}
	For every $t\in E_m$,
	\[
	\frac{y^2}{t(t^2+y^2)}
	=
	\frac{y}{t}\frac{y}{t^2+y^2}
	\ge
	\frac{y}{b_m}\frac{y}{t^2+y^2}.
	\]
	Equality occurs only at $t=b_m$. Since $E_m$ has positive
	measure, integration together with
	\eqref{eq:even-log-int}, \eqref{eq:even-arg-int}, and
	\eqref{eq:bmax} gives
	\begin{equation}\label{eq:even-comparison}
		\log R_{2m}(y)
		>
		\frac{4y}{(2m+1)^2}\,
		\beta_{2m}(y).
	\end{equation}

	If
	\[
	(2m+1)^2
	\le
	\frac{6\pi y}{\log y},
	\]
	then
	\[
	\frac{4y}{(2m+1)^2}
	\ge
	\frac{2\log y}{3\pi},
	\]
	and \eqref{eq:even-comparison} immediately gives
	\eqref{eq:phase-gain}.

	It remains to consider
	\[
	(2m+1)^2
	>
	\frac{6\pi y}{\log y}.
	\]
	By Lemma~\ref{lem:ratio-representation},
	\[
	R_{2m}(y)
	=
	|\lambda|
	\frac{|1-\kappa^{2m+2}|}
	{|1-\kappa^{2m+1}|}.
	\]
	The triangle inequality gives
	\[
	|1-\kappa^{2m+2}|
	\ge1-r^{2m+2}
	>
	1-r^{2m+1},
	\qquad
	|1-\kappa^{2m+1}|
	\le1+r^{2m+1}.
	\]
	Since $|\lambda|=\sqrt{y/r}>\sqrt y$ and $r=e^{-L}$, we obtain
	\begin{equation}\label{eq:even-tanh}
		R_{2m}(y)
		>
		\sqrt y\,
		\frac{1-r^{2m+1}}{1+r^{2m+1}}
		=
		\sqrt y\,
		\tanh\!\left(\frac{(2m+1)L}{2}\right).
	\end{equation}

	The present assumption gives
	\[
	2m+1>
	\sqrt{\frac{6\pi y}{\log y}}.
	\]
	Combining this with \eqref{eq:L-lower}, we obtain
	\[
	\frac{(2m+1)L}{2}
	>
	\sqrt{\frac{2\pi}{3\log y}}.
	\]
	Since $\tanh$ is strictly increasing,
	Lemma~\ref{lem:tanh-estimate}, applied with $s=\log y$, gives
	\[
	\tanh\!\left(\frac{(2m+1)L}{2}\right)
	>
	\tanh\!\sqrt{\frac{2\pi}{3\log y}}
	>
	y^{-1/6}.
	\]
	Substituting this into \eqref{eq:even-tanh} yields
	\[
	R_{2m}(y)>y^{1/3}.
	\]
	Therefore
	\[
	\log R_{2m}(y)
	>
	\frac13\log y
	>
	\frac{2\beta_{2m}(y)}{3\pi}\log y,
	\]
	where the last inequality follows from
	$\beta_{2m}(y)<\pi/2$.

	Thus \eqref{eq:phase-gain} holds for even $n$ as well. Together
	with the odd case and the case $y=1$, this completes the proof.
\end{proof}

\section*{Declaration of competing interests}

The authors declare that they have no known competing financial interests or
personal relationships that could have appeared to influence the work
reported in this paper.

\section*{Acknowledgements}

\begin{sloppypar}
This work was supported by the National Natural Science Foundation of China (Grant No. 12671397) and by the Shanxi Key Laboratory of Digital Design and
Manufacturing (Grant No. 202204010931025).
\end{sloppypar}

\section*{Data availability}

No data were generated or analyzed in this study.

\end{document}